\newcommand{\hh}{{\hspace{.3mm}}}
\newcommand{\pdot}{{\boldsymbol{\cdot}}}
\newtheorem{theorem}{Theorem}[section]
\newtheorem{lemma}[theorem]{Lemma}
\newtheorem{proposition}[theorem]{Proposition}
\newtheorem{corollary}[theorem]{Corollary}
\theoremstyle{definition}
\theoremstyle{remark}
\newtheorem{remark}[theorem]{Remark}
\newcommand{\be}{\begin{equation}}
\newcommand{\ee}{\end{equation}}
\newcommand{\g}{\go }
\newcommand{\ba}{\begin{array}}
\newcommand{\ea}{\end{array}}
\newcommand{\beq}{\begin{eqnarray}}
\newcommand{\eeq}{\end{eqnarray}}
\newtheorem{lm}{lemma}
\newtheorem{thee}{theorem}
\newtheorem{proo}{proposition}
\newtheorem{co}{corollary}
\newtheorem{rem}{remark}
\newtheorem{deff}{definition}
\newcommand{\bd}{\begin{deff}}
\newcommand{\ed}{\end{deff}}
\newcommand{\bl}{\begin{lm}}
\newcommand{\el}{\end{lm}}
\newcommand{\bp}{\begin{proo}}
\newcommand{\ep}{\end{proo}}
\newcommand{\bt}{\begin{thee}}
\newcommand{\et}{\end{thee}}
\newcommand{\bc}{\begin{co}}
\newcommand{\ec}{\end{co}}
\newcommand{\brm}{\begin{rem}}
\newcommand{\erm}{\end{rem}}
\def\Cal{\mathcal}
\newcommand{\cL}{{\Cal L}}
\newcommand{\newc}{\newcommand}
\let\ccdot\cdot
\def\cdot{\hbox to 2.5pt{\hss$\ccdot$\hss}}
\newc{\aR}{\mbox{\boldmath{$ R$}}}
\newc{\aS}{\mbox{\boldmath{$ S$}}}
\newc{\aT}{\mbox{\boldmath{$ T$}}}
\newc{\aW}{\mbox{\boldmath{$ W$}}}
\newc{\aD}{\mbox{\boldmath{$ D$}}\hspace{-.2mm}}
\newc{\aK}{\mbox{\boldmath{$ K$}}}
\newc{\aL}{\mbox{\boldmath{$ L$}}}
\newcommand{\End}{\operatorname{End}}
\newcommand{\cT}{{\mathcal T}}
\let\hash=\sharp  
\newcommand{\nn}[1]{(\ref{#1})}
\newc{\obstrn}[2]{B^{#1}_{#2}}
\newcommand{\rpl}                         
{\mbox{$
\begin{picture}(12.7,8)(-.5,-1)
\put(0,0.2){$+$}
\put(4.2,2.8){\oval(8,8)[r]}
\end{picture}$}}
\newcommand{\lpl}                         
{\mbox{$
\begin{picture}(12.7,8)(-.5,-1)
\put(2,0.2){$+$}
\put(6.2,2.8){\oval(8,8)[l]}
\end{picture}$}}
\newc{\tensor}[1]{#1}
\newc{\Mvariable}[1]{\mbox{#1}}
\newc{\down}[1]{{}_{#1}}
\newc{\up}[1]{{}^{#1}}
\newc{\JulyStrut}{\rule{0mm}{6mm}}
\newc{\midtenPan}{\mbox{\sf S}}
\newc{\midten}{\mbox{\sf T}}
\newc{\midtenEi}{\mbox{\sf U}}
\newc{\ATen}{\mbox{\sf E}}
\newc{\BTen}{\mbox{\sf F}}
\newc{\CTen}{\mbox{\sf G}}
\def\sideremark#1{\ifvmode\leavevmode\fi\vadjust{\vbox to0pt{\vss
 \hbox to 0pt{\hskip\hsize\hskip1em
 \vbox{\hsize3cm\tiny\raggedright\pretolerance10000
 \noindent #1\hfill}\hss}\vbox to8pt{\vfil}\vss}}}%
\numberwithin{equation}{section}
\begin{document}

\renewcommand{\today}{}
\title{Renormalized Yang--Mills Energy  
on  Poincar\'e--Einstein~Manifolds}
\author{A. Rod Gover${}^\heartsuit$, Emanuele Latini${}^\clubsuit$, Andrew Waldron${}^\spadesuit$ \& Yongbing Zhang${}^\diamondsuit$}

\address{${}^\heartsuit$Department of Mathematics\\
  The University of Auckland\\
  Private Bag 92019\\
  Auckland 1\\
  New Zealand} \email{gover@math.auckland.ac.nz}
  
   \address{${}^\clubsuit$
  Dipartimento di Matematica, Universit\`a di Bologna, Piazza di Porta S. Donato 5,
 and  INFN, Sezione di Bologna, Via Irnerio 46, I-40126 Bologna,  Italy}
 \email{emanuele.latini@UniBo.it}
 
  \address{${}^{\spadesuit}$
  	Center for Quantum Mathematics and Physics (QMAP) and\\
  Department of Mathematics\\
  University of California\\
  Davis, CA95616, USA} \email{wally@math.ucdavis.edu}

  \address{${}^\diamondsuit$
School of Mathematical Sciences and Wu Wen-Tsun Key Laboratory of Mathematics \\
University of Science and Technology of China\\ Hefei, Anhui, 230026, P. R. of China.}
\email{ybzhang@amss.ac.cn}

\vspace{10pt}

\vspace{10pt}

\renewcommand{\arraystretch}{1}

\begin{abstract}
We prove that the renormalized Yang--Mills energy on six dimensional Poincar\'e--Einstein spaces 
can be expressed as the bulk integral of a local, 
 pointwise conformally invariant integrand. We show that the latter agrees with the corresponding anomaly boundary integrand in the seven dimensional renormalized Yang--Mills energy. Our methods rely on a generalization of  the Chang--Qing--Yang method for computing renormalized volumes of Poincar\'e--Einstein manifolds, as well as known  scattering theory results for Schr\"odinger operators with short range potentials.

  \vspace{2cm}
\noindent
{\sf \tiny Keywords: Yang--Mills Theory, Conformal Geometry, Poincar\'e--Einstein, AdS/CFT,  Renormalization,  Connection-Coupled Conformal Invariants}
\end{abstract}

\maketitle

\pagestyle{myheadings} \markboth{Gover, Latini, Waldron \& Zhang}{Yang-Mills on conformally compact manifolds}

\newpage



\newcommand{\go}{{\mathring {g}}}
\newcommand{\cc}{\boldsymbol{c}}
\newcommand{\DD}{{\sf D}}
\newcommand{\ext}{{\rm d}}
\newcommand{\Trace}{\operatorname{Trace}}

\newc{\LagDen}{\cL_A}
\newc{\hatg}{\hat{\g}}
\newc{\gsup}{\mbox{\textsl{\tiny g}}}
\newc{\hatgsup}{\hat{\mbox{\textsl{\tiny g}}}}
\newc{\Action}{{S}}

%
%
%
%
%
%
%
%
%
%
%
%
%
%
%
%

\section{Introduction}

Poincar\'e--Einstein manifolds and, more generally, conformally compact
Riemannian manifolds, have  proved to be ideal structures for
exposing deep and enlightening relationships between conformal geometry,
scattering theory, functional analysis linked to representation
theory, and the AdS/CFT correspondence in physics~\cite{thebookFG,GrZw,MM,Juhl,GW,SkHe,GrWi}. 
Under certain
circumstances, in this setting divergent energy integrals admit a well-defined finite part called the {\it renormalized energy}.
A first key case is the renormalized Einstein--Hilbert action for Poincar\'e--Einstein manifolds. It computes the renormalized volume of these structures and
 is a fundamental global invariant 
closely linked to the Chern--Gauss--Bonnet invariant \cite{SkHe,GrSrni,Anderson,CQY}. 
Another fundamental case is that of the renormalized Yang--Mills energy functional~\cite{GLWZ}, which is extremely general since it  applies to essentially any connection on the base manifold. 
From its definition it is not at all clear that it can be
recovered as an integrated local invariant; we shall show that on Poincar\'e--Einstein 6-manifolds  it is. We now give some background and then explain how to define this invariant.

%

%
%
%
%

\medskip

The Yang--Mills equations, 
\begin{equation}\label{ymeq}
j[A,g]_a:=g^{bc}\nabla^A_b F^A_{ca}=0\, ,
\end{equation}
are central to modern day particle physics and
geometry~\cite{YM,Donaldson,DonaldsonKron}. In the above ~$g$ is a
(possibly pseudo-)Riemannian metric on a $d$-manifold $M^d$, while
$\nabla^A$ and $F^A$ are the covariant derivative and curvature of a
connection $A$ on a vector bundle $V\to {\mathcal V}M \stackrel \pi
\longrightarrow M$.  The Yang--Mills equations are (generically) the
Euler--Lagrange equations for the Yang--Mills energy functional
  $$
E[A,g]  := -\frac14\int_{M} 
\ext {\rm Vol}(g)\,
\operatorname{Tr}(g^{ac}g^{bd}F^A_{ab}F^A_{cd})\, ,
$$ where the trace is over the composition of endomorphism-valued
two-forms $F^A$. Our
detailed tensor and abstract index conventions are given below.

\medskip

A compact $d$-manifold $M^d$ equipped with a Riemannian metric $g_+$ on its interior $M_+:=M\setminus {\partial M}$ is said to be a {\it Poincar\'e--Einstein manifold} if $g_+$ is Einstein with negative scalar curvature $-d(d-1)$, and there
 exists a smooth defining function $\sigma\in C^\infty M$ 
 such that
 $$ g=\sigma^2 g_+
 $$
 extends smoothly as a metric to the boundary ${\partial M}$. In this case $g$ is
 called a {\em compactified metric}. 
 Any such compactified metric $g$ induces a boundary metric $\bar g$, and in turn a canonical  conformal class of boundary metric $c_{\partial M}$.
  The negative scalar
 curvature Einstein condition implies that the Schouten tensor obeys
\begin{equation}\label{P+}
P^{g_+} = -\frac 12\hh  g_+ \, .
\end{equation}
Many of our results and constructions apply  when the Einstein stricture is dropped, in which case $(M_+,g_+)$ is called a {\it conformally compact manifold}.

The renormalized volume for even dimensional Poincar\'e--Einstein manifolds is constructed by first  defining a {\it cut-off interior} $M_{\varepsilon}:=\{p\in M|\sigma(p)>\varepsilon\}$, where $\varepsilon\in {\mathbb R}_{>0}$. While the volume of $M_+$ with respect to $g_+$ is ill-defined, 
its  {\it regulated volume} ${\rm Vol}_{\varepsilon}:=\int_{M_{\varepsilon } }
\ext {\rm Vol}(g_+)$ has leading asymptotics
$$
{\rm Vol}_{\varepsilon}\stackrel{\varepsilon\to 0}\sim \frac{\varepsilon^{1-d}}{d-1} \int_{\partial M} \frac{\ext {\rm Vol}(\bar g)}{|\ext \sigma|_g}\, .
$$
 The above clearly diverges in the small $\varepsilon$ limit.
The {\it renormalized volume}~${\rm Vol}_{\rm ren}$ is then given by first considering a finite part according to
$$
{\rm Vol}_{\rm ren}^g:=\frac1{(d-1)!}\, \frac{\ext^{d-1}}{\ext \varepsilon^{d-1}}\Big(
\varepsilon^{d-1} \int_{M_{\varepsilon } }
\ext {\rm Vol}(g_+)\Big)\Big|_{\varepsilon=0}\, .
$$
Each boundary  metric~$\bar g\in c_{\partial M}$
uniquely determines (at least to sufficient asymptotic order) a canonical compactified metric~$g^{\rm c}$~\cite{GLee}. In then turns out that ${\rm Vol}_{\rm ren}^{{g^{\rm c}}}=:{\rm Vol}_{\rm ren}$ is  independent of the choice of $\bar g$~\cite{SkHe,GrSrni}. Thus it  is an invariant of the structure $(M,g_+)$. 

\smallskip

An important result of Anderson~\cite{Anderson} is that when $d=4$, the renormalized volume obeys a Chern--Gau\ss--Bonnet type formula
$$
{\rm Vol}_{\rm ren}=\frac{4\pi^2}3 \chi_M -\frac1{24} \int_M \ext {\rm Vol}( g)\hh\big |W^g\big|^2_g\, ,
$$
where $W^g$ denotes the Weyl tensor of $g$ and $\chi_M$ is the Euler characteristic of $M$.
The above integrand is a pointwise conformal invariant, meaning  $$\ext {\rm Vol}(\Omega^2 g)\hh \big|W^{\Omega^2 g} \big|^2_{\Omega^2 g}= \ext {\rm Vol}( g)\hh |W^g|^2_g\, ,$$ for any $0<\Omega\in C^\infty M\, .$

Chang, Qing and Yang have a  proof that the renormalized volume in all even dimensions $d\geq 4$ can  be expressed as a sum of an Euler characteristic term and an integrated pointwise conformal invariant~\cite{CQY}. Their argument relies on several building blocks. These include:  (i) the linear shift by a critical GJMS operator property of the Branson $Q$-curvature under conformal transformations~\cite{BQ,BQ1}; (ii)  factorization of the critical GJMS operator into simpler second order operators~\cite{thebookFG,Go}; (iii) constancy of the Branson $Q$-curvature on Einstein metrics; (iv) the relationship between $Q$-curvature flat metrics and the 
inhomogeneous Laplace equation~\cite{FG02}
$$
\Delta^{g_+} f = 1-d\, ;
$$
(v) the appearance of the renormalized volume as a boundary integral of  Neumann data for solutions to the 
inhomogeneous Laplace equation~\cite{FG02}; (vi) uniqueness of solutions  to the inhomogeneous Laplace equation given  suitable boundary data based on the resolvent for the asympotically hyperbolic Laplacian, constructed by Mazzeo and Melrose~\cite{MM}, and the associated scattering theory and Poisson operator developed by Graham and Zworski~\cite{GrZw};
(vii) a classification of integrated conformal invariants conjectured by the physicists Deser and Schwimmer~\cite{DeSchw}, and proved by Alexakis~\cite{Ale}.

Clearly, the  Chang--Qing--Yang argument 
relies critically on a diverse set of
requisite building blocks. 
While it
has already been applied to the renormalized area of minimal submanifolds of Poincar\'e--Einstein manifolds~\cite{CGKTW}, both
the renormalized volume and submanifold area results hold in a setting where the Chern--Gau\ss--Bonnet formula might have instead been applied along the lines used for Anderson's original  Poincar\'e--Einstein four manifold result.
Our results highlight that the
Chang--Qing--Yang method  can in fact be applied to other geometric settings where no  analog of the Chern--Gau\ss--Bonnet formula applies.

 \smallskip
 
The problem we solve is as follows. Let $(M_+^6,g_+)$ be a Poincar\'e--Einstein manifold and~$A$ a smooth connection on a vector bundle ${\mathcal V}M$ that solves the Yang--Mills equation 
\begin{equation}\label{ymeqs}
j[A,g_+]=0
\end{equation}
on the interior $M_+$. A detailed study of the asymptotics of such solutions, as well as the definition of the accompanying renormalized energy, was given in~\cite{GLWZ}; see also~\cite{Usula,deLima} for related studies of the Yang--Mils equations on  conformally compact manifolds.
We wish to find an
integral formula for 
the {\it renormalized Yang--Mills energy}  defined by
\begin{equation}\label{eren}
E_{\rm ren}:=-\frac14\frac{\ext}{\ext \varepsilon}\Big(
\varepsilon \int_{M_{\varepsilon } }
\ext {\rm Vol}(g_+)
\operatorname{Tr}(g_+^{ab}g_+^{cd}F^A_{ac}F^A_{bd})
\Big)\Big|_{\varepsilon=0}\, ,
\end{equation}
where again, the one parameter family of cut-off interiors $M_{\varepsilon}$ is defined in terms of the metric ${g^{\rm c}}$ determined by $\bar g$.
Note  that because the bulk dimension is even, there is no $\log \varepsilon$ term in  the asymptotics of the {\it regulated Yang--Mills energy}~\cite{GLWZ}
\begin{equation}\label{Ereg}
E_{\varepsilon}:=-\frac14\int_{M_{\varepsilon } }
\ext {\rm Vol}(g_+)
\operatorname{Tr}(g_+^{ab}g_+^{cd}F^A_{ac}F^A_{bd})
\, .
\end{equation}
Partly because of this, the energy in Equation~\nn{eren}
 does not 
depend on the choice of boundary metric $\bar g\in c_{\partial M}$   for~$M$ used to determine the family of cut-off interiors~$M_\varepsilon$~\cite{GLWZ}. It is therefore an invariant of the structure.


\smallskip

 In~\cite{GLWZ} it was further established that the functional gradient of $E_{\rm ren}$ along a path of 
solutions to the Yang--Mills Equations~\nn{ymeqs}
is given by
$$
\hat n^a \hat n^b (\nabla_{\hat n} \nabla_a F_{bc})|_{\partial M} + \bar g^{ab}\big(5 H^g \bar \nabla_a \bar F_{ac}
-(\bar \nabla_b H^g)\bar F_{ac}\big)\, .
$$
The above is a natural conformal invariant. In fact, evaluated on solutions, it  recovers the global  Neumann data left undetermined by  the formal aymptotics
of  the Yang--Mills system on Poincar\'e--Einstein manifolds. In the above display, $\bar \nabla$ denotes the covariant derivative of the connection induced along the boundary by $A$, coupled to the Levi-Civita connection of the boundary metric induced by the compactified metric $g=\sigma^2 g_+$, and~$\hat n$ and $H^g$ are the corresponding  unit normal vector and mean curvature of ${\partial M}$. The curvature of the bundle connection induced along $\partial M$ is denoted by $\bar F$.

\medskip

Our main result is as follows.
\begin{theorem}\label{main}
Let $(M^6_+,g_+)$ be a Poincar\'e--Einstein manifold and $A$  a smooth connection on a vector bundle ${\mathcal V}M$. Moreover suppose that on $M_+$ the Yang--Mills equation 
$$
j[A,g_+]=0
$$
holds,
 $|F|^2_{g_+}$ is nowhere vanishing, and $|\bar F|^2_{\bar g}$ is nowhere vanishing on $\partial M$.
  Then the renormalized Yang--Mills energy is given by
 $$E_{\rm ren}= \frac14\int_{M}  \ext {\rm Vol}(g)
{\mathcal E}_{\rm ren}^g\, ,
$$
where
  ${\mathcal E}_{\rm ren}^g=\Omega^6 {\mathcal E}_{\rm ren}^{\Omega^2 g}$
  is a pointwise conformal invariant,
given for any compactified metric $g =\sigma^2 g_+$ by 
\begin{equation}\label{Eren}
 {\mathcal E}_{\rm ren}\!\!:=\!\!
\operatorname{Tr}\!\Big(\!\!\hh-j^a 
\!\hh
j_a \!
-\tfrac12 (\nabla^c \!\hh F^{ab})\hh\!\nabla_c F_{ab} 
+6 F^{ab} \!\hh P_a{}^c \!\hh F_{bc}\!\hh
-3F^{ab}\nabla_a j_b
\!\hh+
F^{ab} W^c{}_{ab}{}^d F_{dc}
+2F^{ab}\!\hh  F^{c}{}_a F_{bc}\!\Big)\!\hh .
\end{equation}
Here $j_a:=j_a[A,g]$ and $0<\Omega\in C^\infty M$.
\end{theorem}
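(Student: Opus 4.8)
The plan is to run a connection-coupled version of the Chang--Qing--Yang argument, with the critical $Q$-curvature replaced by the candidate integrand $\mathcal{E}_{\rm ren}$ and the asymptotically hyperbolic resolvent replaced by the scattering theory for a Schr\"odinger operator whose potential is built from the Yang--Mills Lagrangian $\operatorname{Tr}|F|^2_{g_+}$. The first reduction is to exploit that both sides of the asserted identity are invariant: $E_{\rm ren}$ is an invariant of $(M,g_+,A)$ by~\cite{GLWZ}, while $\ext{\rm Vol}(g)\,\mathcal{E}_{\rm ren}^g$ is a conformally invariant top-form by construction. I would therefore fix the geodesic normal form $g_+=\sigma^{-2}\big(\ext\sigma^2+h_\sigma\big)$ of the distinguished compactification $g^{\rm c}$, in which the Fefferman--Graham expansion of $h_\sigma$ is available and the divergent and finite parts of~\nn{eren} and the smooth bulk integrand of~\nn{Eren} can be compared in one gauge adapted to $\partial M$.

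Next I would record the elementary weight identity $\operatorname{Tr}|F|^2_{g_+}\,\ext{\rm Vol}(g_+)=\sigma^{-2}\,\operatorname{Tr}|F|^2_g\,\ext{\rm Vol}(g)$, valid because $F$ is metric-independent as a two-form and the bulk dimension is $d=6$; thus the entire divergence of the regulated energy~\nn{Ereg} is carried by the single factor $\sigma^{-2}$, and in particular there is no $\log\varepsilon$ term since the bulk dimension is even. The computational core is to trade this singular integrand, modulo explicit total divergences, against the smooth conformal invariant $\mathcal{E}_{\rm ren}^g$. Concretely, I would integrate $\sigma^{-2}\operatorname{Tr}|F|^2_g$ by parts against an auxiliary potential solving an inhomogeneous Laplace-type equation $\Delta_{g_+}\Phi=\operatorname{Tr}|F|^2_{g_+}+(\text{lower order})$ on $M_+$; the hypothesis that $|F|^2_{g_+}$ is nowhere vanishing is used to verify that this potential is a genuine short-range perturbation, so that the theory of~\cite{MM,GrZw} furnishes a solution with a controlled asymptotic expansion and a well-defined Neumann datum. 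The Einstein normalization~\nn{P+}, the on-shell condition~\nn{ymeqs}, and the conformal relation expressing $j[A,g_+]$ through $j[A,g]$ and $F\cdot\nabla\log\sigma$ are precisely what convert the repeated integrations by parts into curvature couplings, producing the Schouten and Weyl terms $6F^{ab}P_a{}^cF_{bc}$ and $F^{ab}W^c{}_{ab}{}^dF_{dc}$ together with the current and cubic terms of~\nn{Eren}.

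Applying Stokes' theorem over $M_\varepsilon$ then expresses the regulated integral as $\int_{M_\varepsilon}\mathcal{E}_{\rm ren}^g\,\ext{\rm Vol}(g)$ corrected by boundary fluxes at $\{\sigma=\varepsilon\}$, whose $\varepsilon$-expansions I would evaluate term by term in the geodesic gauge. The singular parts of the fluxes are fixed by the formal asymptotics and are removed by the finite-part operation in~\nn{eren}, and the remaining bookkeeping must return exactly $E_{\rm ren}=\tfrac14\int_M\ext{\rm Vol}(g)\,\mathcal{E}_{\rm ren}^g$. To determine the numerical coefficients unambiguously --- and as an independent consistency check --- I would match the surviving boundary integrand against the conformal anomaly (the coefficient of $\log\varepsilon$) of the \emph{seven}-dimensional regulated Yang--Mills energy of~\cite{GLWZ}; the nonvanishing of $|\bar F|^2_{\bar g}$ on $\partial M$ guarantees that this boundary integrand is pointwise defined, and the absence of any Chern--Gau\ss--Bonnet/Euler contribution reflects that the Yang--Mills Lagrangian, unlike the Pfaffian, is not topological.

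The principal obstacle is the last stage: showing that the regularized boundary fluxes, after the finite-part operation, assemble into precisely the invariant~\nn{Eren} with the stated coefficients. This is where the Chang--Qing--Yang machinery must genuinely be rebuilt, since it requires simultaneous control of the formal asymptotic solution of the Yang--Mills system on $M_+$ --- whose undetermined Neumann data is the natural conformal invariant recalled just before the theorem --- and of the global scattering solution $\Phi$, whose subleading boundary coefficient must be matched against that formal data using uniqueness for the short-range problem. Establishing this matching in the connection-coupled setting, with no Chern--Gau\ss--Bonnet shortcut of the kind available in Anderson's four-dimensional volume computation, is the essential new analytic input.
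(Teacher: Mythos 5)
Your proposal correctly identifies the Chang--Qing--Yang template and two of its ingredients (scattering theory for a short-range Schr\"odinger perturbation, and the identification of $E_{\rm ren}$ with boundary Neumann data), but it omits the pivot that makes the template run: a Yang--Mills analog of the $Q$-curvature with a \emph{linear} conformal shift by a total divergence. The paper uses the Branson--Gover quantity $Q[g,A]$ of \nn{Q}, whose transformation \nn{Qshift} shifts by $-3\operatorname{Tr} F^{ab}\nabla^c\nabla_{[c}(F_{ab]}\log\Omega)$ --- a divergence, linear in $\log\Omega$ --- and which on shell satisfies $Q[g_+,A]=|F|^2_{g_+}$ (Lemma~\ref{thenewone}). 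It is exactly this linearity that converts the renormalization problem into a single linear scalar PDE: demanding that $e^{6f}Q[e^{2f}g_+,A]$ be a pure divergence yields Equation~\nn{farfarabove}, equivalently the inhomogeneous Schr\"odinger equation \nn{FROGGO} for $f'=|F|_{g_+}f$ (Lemma~\ref{shift}). Your scheme instead trades $\sigma^{-2}|F|^2_g$ directly against $\mathcal{E}_{\rm ren}$ by ``repeated integrations by parts,'' with coefficients fixed afterwards by matching the seven-dimensional anomaly. But the conformal variation of the naive energy density is neither linear in the conformal factor nor a divergence, so no single auxiliary potential $\Phi$ solving $\Delta_{g_+}\Phi=\operatorname{Tr}|F|^2_{g_+}+\dots$ can absorb the divergences; without the $Q$-curvature and the decomposition $Q=\mathcal{E}_{\rm ren}+\text{divergences}$ of Lemmas~\ref{E2Q} and~\ref{Q2ren} (which is where the coefficients in \nn{Eren} actually come from, via the Weitzenb\"ock identity of Lemma~\ref{WBL} and the anomaly integrand of Lemma~\ref{pointE}), the boundary-flux bookkeeping you defer to the last stage has no mechanism to close. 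Your final paragraph concedes precisely this, labeling the central step ``the essential new analytic input''; in the paper that input is not a new Stokes computation but the $Q$-curvature machinery plus the assembly of Section~\ref{renorm}.

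Your analytic setup is also mis-specified in a way that matters. The correct source is $-|F|_{g_+}$, not $|F|^2_{g_+}$, and the operator is the conjugated Laplacian $\Delta^{g_+,A}=\frac{1}{\,|F|_{g_+}}\nabla\circ|F|^2_{g_+}\nabla\circ\frac{1}{\,|F|_{g_+}}$ of \nn{selfadjoint}: the nowhere-vanishing of $|F|_{g_+}$ is what allows this conjugation to be defined at all, and the nonvanishing of $|\bar F|^2_{\bar g}$ on $\partial M$ is what makes $V=\Delta_{g_+}|F|_{g_+}/|F|_{g_+}+2(d-3)$ extend smoothly with $V|_{\partial M}=0$ (Lemma~\ref{thepot}) --- not, as you assert, that nowhere-vanishing of $|F|^2_{g_+}$ itself ``verifies short range.'' This structure is not cosmetic: it exhibits $u=|F|_{g_+}$ as the solution of the scattering Equation~\nn{scatteringequation} at $s\in\{2,d-3\}$, and via $|F|_{g_+}\notin L^2(M_+,g_+)$ places all bound states in $(2,d-3)$, giving holomorphy of the Poisson operator at $s=d-3$ and hence existence and uniqueness of $f'$ (Theorem~\ref{mainth}); uniqueness is in turn what makes the Neumann coefficient $\overline G$ well defined, so that Theorem~\ref{BoundaryB} can identify $\frac14\int_{\partial M}\ext{\rm Vol}(\bar g)\,|F|^2_{g^{\rm c}}\,\overline G$ with $E_{\rm ren}$. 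Finally, you do not address two steps the paper needs: the regularity of $\tilde g$ coming from the $\sigma^2\log\sigma$ term in $f'$, controlled by $K|_{\partial M}=0$ (Lemma~\ref{thebluelemma}), and the vanishing of the residual divergence term in the limit, which requires the Yang--Mills decay $n^aF_{ab}=-\tfrac12\sigma\nabla^cF_{cb}$ (Lemma~\ref{bdy}).
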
 

\noindent In the above
$|F|^2_{g}:=-\operatorname{Tr}(g^{ac}g^{bd}F^A_{ab}F_{cd}^A)$. Although
we assume, in the theorem, that~$|F|^2_{g_+}$ is nowhere vanishing,
this is a technical requirement imposed by our proof, that may
possibly be relaxed. Of course for many classes of connections,
$|F|^2_{g}$ is only zero where the connection is flat.
Our assumption also imply that $|F|_{g_+}$ is not square integrable with respect to the measure $ \ext {\rm Vol}(g_+)$. Note that if it were to be square integrable, no energy renormalization is required~\cite{GLWZ}.

Our proof method and the structure of the article are summarized below.
The first ingredient we need to establish is a Yang--Mills analog of the Branson $Q$-curvature. Such is given by the Branson--Gover $Q$-curvature of Equation~\nn{Q} which satisfies the linear shift property
$$
\Omega^6Q[\Omega^2 g,A]-Q[g,A]=-3\operatorname{Tr} \nabla^c\big(F^{ab} F_{[ab}\nabla_{c]} \log \Omega \big)\, .
$$
While the Branson--Gover $Q$-curvature supplies an integrated conformal invariant on closed 6-manifolds, we also need a pointwise conformal invariant that differs from the $Q$-curvature by a total divergence. Exactly such appeared as the boundary integrand for the  anomaly in the renormalized Yang--Mills energy on Poincar\'e--Einstein 7-manifolds in~\cite{GLWZ}. This is also presented in Section~\ref{energies}; see Lemmas~\ref{pointE} and~\ref{E2Q}. Indeed this  anomaly integrand exactly reproduces the six dimensional renormalized energy integrand ${\mathcal E}_{\rm ren}$ of Theorem~\ref{main}; see Lemma~\ref{Q2ren}.

The next key step is to establish that there exists a sufficiently smooth conformal rescaling $\tilde g=\tilde \sigma^2 g_+$ of the Poincar\'e--Einstein metric such that the integral of $Q[A,\tilde g]$ vanishes. This is achieved by first showing that the function $f=\tilde \sigma/\sigma$ can be determined by solving an inhomogeneous Schr\"odinger equation. This is achieved in Section~\ref{ise}, where it is also shown that $Q[g_+,A]=|F|^2_{g_+}$ when $g_+$ is a Poincar\'e--Einstein metric and $A$ is a Yang--Mills connection. This is the quantity whose integral we wish to renormalize.

It is critical that the inhomogeneous Schr\"odinger equation has a unique solution for suitable boundary conditions. Here, we are able to mainly rely on  earlier work of Mazzeo--Melrose~\cite{MM}, Graham--Zworski~\cite{GrZw} and Joshi--S\'a Barreto~\cite{BaJo}. This is dealt with in Section~\ref{scatter}.

The last key ingredient necessary before assembling the above building blocks is to adapt an argument of Fefferman and Graham to the inhomogeneous Schr\"odinger equation to show that the Neumann data integrated over the boundary recovers the renormalized energy. This, as well as the final assembly of ingredients is presented in Section~\ref{renorm}.

The article concludes with two examples in Section~\ref{exes}, where we consider the cases where (i) $A$ is the tractor connection of the conformal class of metrics determined on $M$ by~$g_+$, and (ii) where 
$F$ is a closed and coclosed two form and therefore solves the Maxwell equations special case of the Yang--Mills system. The former example determines the renormalization of the energy integral whose integrand is the squared Weyl curvature. This recovers
 the result for such due to Case {\it et al}~\cite{Caseetal}
 and  involves a non-trivial six dimensional conformal invariant due to Fefferman and Graham~\cite{FG-ast}.
%
%

%
%
%
%
%
%
%
%
%
%
%
%

\subsection{Conventions}

We denote by $M^d$ a smooth, oriented $(d\geq3)$-manifold, and drop the superscript $d$ when context makes this clear. By smooth we mean $C^\infty$. The exterior derivative will be denoted by $\ext$.
Tensors on  $M$ will  often be handled using abstract index notation~\cite{Penrose}, so that a metric tensor $g\in \Gamma(\odot^2 T^*M)$  is then denoted $g_{ab}$ and its inverse by $g^{ab}$, thus $g^{ac}g_{cb}=:\delta_b^a$ is the identity endomorphism on $\Gamma(TM)$. Indices are raised and lowered in the standard way,  for example $v^a := g^{ab} v_b\in \Gamma(T^*M)$ when $v_a\in \Gamma(T^*M)$,
and when  multiple metrics are present, we often write $v^a_g$ to avoid confusion. Throughout, we take all metrics to be positive definite.

 We also employ  shorthand notations such as
$v^2:=g_{ab} v^a v^b$, $u.v=u_a v^a$.
Square brackets denote antisymmetrization (with unit weight) over a group of indices, so for example
$
X_{[ab}Y_{c]}:=\frac16(X_{ab}Y_c +
X_{bc}Y_a +
X_{ca}Y_b -
X_{ba}Y_c -
X_{ac}Y_b -
X_{cb}Y_a)
$.

The  volume element defined by a metric $g$ is denoted~$\ext {\rm Vol}(g)$ and its 
Levi-Civita by~$\nabla^g$ (or simply $\nabla$ if context allows).
 The Riemann curvature tensor $R^g$ is defined, in the abstract index notation, by
$$
[\nabla_a , \nabla_b] v^c := R_{ab}{}^c{}_d v^d\, ,
$$
where $v\in \Gamma(TM)$. We will also denote the above by the operator $R_{ab}{}^\#$ acting on $v^c$,
which acts on higher rank tensors in the obvious way. 
The Ricci tensor $Ric_{ab}:=-R_{ac}{}^c{}_b$. The Schouten tensor $P$ and its trace~$J:=P_a{}^a$ are defined by
\begin{equation*}
Ric=:(d-2) P + g J\, ,
\end{equation*}
while the trace-free Weyl tensor $W$ is then determined by  
\begin{equation*}
R_{abcd}=W_{abcd}+g_{ac} P_{bd}
-g_{bc} P_{ad}
+g_{bd} P_{ac}
-g_{ad} P_{bc}\, .
\end{equation*}
The
Cotton tensor
$C_{abc}:= \nabla_a P_{bc}-\nabla_b P_{ac}$
obeys the identity
\begin{equation}\label{divWeyl}
\nabla^a W_{abcd}
=(d-3) C_{cdb}\, .
\end{equation}
The rough Laplacian $\Delta^g:=g^{ab} \nabla_a \nabla_b$ and acts on tensors of any type. 

Given a vector bundle ${\mathcal V}M$ over $M$,  compositions 
of endomorphisms~$X,Y$ are denoted  by juxtaposition $XY$,
and  the trace of such by  $\operatorname{Tr} XY$. We write  $\nabla^A$ for a connection~$A$ on~${\mathcal V}M$, and use the notation $\nabla^g$   when this is the  Levi-Civita connection of a metric. When context allows, we may even write $\nabla$ for either of these or their coupling.
   The curvature $F^A$ (or simply $F$ for short) of $\nabla^A$ is the endomorphism-valued two-form, defined by a commutator  of connections acting on sections of ${\mathcal V}M$
$$
[\nabla^A_a,\nabla^A_b]=F^A_{ab}\, .
$$
Note that if $v^a\in \Gamma(TM\otimes{\mathcal VM})$, then
$$
[\nabla_a,\nabla_b] v^c = (\nabla_a\nabla_b -\nabla_b\nabla_a) v^c = R_{ab}{}^c{}_d v^d + F_{ab} v^c\, .
$$

We employ an ${\mathcal O}$ notation, where $X={\mathcal O}(f^k)$ means that the tensor field $X=f^k Y$ for some other smooth tensor $Y$. 
Complex conjugation is denoted by~$*$.

\section{Energy Formul\ae}\label{energies}

To describe candidates for the renormalized Yang--Mills energy, we first seek integrated conformal invariants of bundle connections on six-manifolds. 
One such candidate
was constructed using ``holographic'' methods in~\cite{GLWZ}, where it arose as the anomaly of the renormalized
 Yang--Mills energy on seven-manifolds.
 It measures the failure of the latter to be conformally invariant and is constructed from a local, pointwise conformally invariant energy integrand.
 \begin{lemma}[Theorem 1.9~\cite{GLWZ}]\label{pointE}
Let $(M,g)$ be a Riemannian $6$-manifold, possibly with boundary, and $A$ a connection on a vector bundle ${\mathcal V}M$. Then
$$
{\operatorname{En}} [g,A]:=
\frac14\int_M 
\ext {\rm Vol}(g)\,{\mathcal A}[g,A]
$$
is conformally invariant,  $\operatorname{En}[\Omega^2g,A]=\operatorname{En}[g,A]\, ,$ 
where
\begin{equation}\label{calE}
{\mathcal A}[g,A]:=\operatorname{Tr}\Big(
-\frac14 \Delta(F^{ab}F_{ab})
- j^a j_a
-2F^{ab} \nabla_a j_b 
+4 F^{ab} P_a{}^c F_{bc} + J F^{ab}F_{ab}
\Big)\
\end{equation}
is conformally invariant; here meaning $\Omega^6{\mathcal A}[\Omega^2g,A]=\mathcal{A}[g,A]$, for all
 $0<\Omega\in C^\infty M$.
\end{lemma}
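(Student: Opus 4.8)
The plan is to prove the stronger, pointwise statement $\Omega^6\mathcal A[\Omega^2 g,A]=\mathcal A[g,A]$ directly; the integral invariance $\operatorname{En}[\Omega^2 g,A]=\operatorname{En}[g,A]$ then follows immediately, with no boundary subtlety, since $\ext {\rm Vol}(\Omega^2 g)=\Omega^6\ext {\rm Vol}(g)$ in dimension six and the pointwise identity requires no integration by parts. Writing $\hat g=\Omega^2 g$ and $\Upsilon_a:=\nabla_a\log\Omega$, I would first record the conformal behaviour of each constituent: the curvature two-form is inert, $\hat F_{ab}=F_{ab}$, so $\hat F^{ab}=\Omega^{-4}F^{ab}$ and the scalar density $\operatorname{Tr}(F^{ab}F_{ab})$ carries weight $-4$; the Schouten tensor and its trace obey the standard laws $\hat P_a{}^c=\Omega^{-2}(P_a{}^c-\nabla_a\Upsilon^c+\Upsilon_a\Upsilon^c-\tfrac12\Upsilon^e\Upsilon_e\,\delta_a^c)$ and $\hat J=\Omega^{-2}(J-\nabla^a\Upsilon_a-2\Upsilon^e\Upsilon_e)$; and the Laplace--Beltrami operator acts on functions by $\hat\Delta u=\Omega^{-2}(\Delta u+4\Upsilon^a\nabla_a u)$. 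Since the trace is parallel for the coupled connection, the term $-\tfrac14\Delta\operatorname{Tr}(F^{ab}F_{ab})$ reduces to an ordinary scalar Laplacian acting on a weight $-4$ density.

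The decisive transformation is that of the Yang--Mills current. Because the bundle connection $A$ is conformally inert, the anomaly arises solely from the Levi-Civita connection acting on the two form indices of $F$; a short computation using $\hat\Gamma^c_{ab}-\Gamma^c_{ab}=\delta^c_a\Upsilon_b+\delta^c_b\Upsilon_a-g_{ab}\Upsilon^c$ and $\hat g^{bc}=\Omega^{-2}g^{bc}$ gives $\hat j_a=\Omega^{-2}\big(j_a+(d-4)\Upsilon^e F_{ea}\big)$, which in $d=6$ reads $\hat j_a=\Omega^{-2}(j_a+2\Upsilon^e F_{ea})$. The vanishing of the anomaly at $d=4$ is precisely the conformal invariance of the Yang--Mills action in four dimensions, and the nonzero $d=6$ anomaly is the source of every correction term that must be shown to cancel.

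I would then substitute these laws into the five summands of $\mathcal A$, extract the overall factor $\Omega^{-6}$ (which reproduces $\mathcal A[g,A]$ from the leading pieces) and collect the remaining $\Upsilon$-dependent contributions under the trace into four groups: terms linear in $\Upsilon$ carrying a current $j$; terms linear in $\Upsilon$ carrying an explicit $\nabla F$; terms in $\nabla\Upsilon$ multiplying $FF$; and terms quadratic in $\Upsilon$ multiplying $FF$. The first group cancels between $-j^aj_a$ and $-2F^{ab}\nabla_a j_b$ after one use of cyclicity of the trace; the $\nabla\Upsilon$ group cancels once the divergence piece of $\hat J F^{ab}F_{ab}$ is matched against the Laplacian's contribution $(\nabla^a\Upsilon_a)\operatorname{Tr}(F^{ab}F_{ab})$, while the two remaining $\nabla_a\Upsilon^c$ terms (from $-2F^{ab}\nabla_a j_b$ and $4F^{ab}P_a{}^cF_{bc}$) cancel using antisymmetry of $F$; and the quadratic group collapses to a single symmetric contraction $\Upsilon^e\Upsilon^f\operatorname{Tr}(F_e{}^aF_{fa})$ whose coefficients $(-4)+8+(-4)$ sum to zero, after the two $\Upsilon^e\Upsilon_e\operatorname{Tr}(F^{ab}F_{ab})$ terms (from the $P$- and $J$-terms) cancel separately. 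Here the role of the purely divergence term $-\tfrac14\Delta(F^{ab}F_{ab})$ is exactly to supply the $(\nabla^a\Upsilon_a)\operatorname{Tr}(F^{ab}F_{ab})$ and $\Upsilon^a\nabla_a\operatorname{Tr}(F^{ab}F_{ab})$ pieces needed for these cancellations.

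The main obstacle I anticipate is the second group, the terms linear in $\Upsilon$ with an explicit derivative of $F$: the variation of $-2F^{ab}\nabla_a j_b$ produces $-4\Upsilon^e\operatorname{Tr}(F^{ab}\nabla_a F_{eb})$ while the Laplacian term produces $2\Upsilon^e\operatorname{Tr}(F^{ab}\nabla_e F_{ab})$, and these are genuinely different index contractions that do \emph{not} cancel term by term. The resolution is the differential Bianchi identity $\nabla_{[a}F_{bc]}=0$, which together with the antisymmetry of $F$ and cyclicity of the trace yields $\operatorname{Tr}(F^{ab}\nabla_e F_{ab})=2\operatorname{Tr}(F^{ab}\nabla_a F_{eb})$; substituting this converts the two contributions into $-2\Upsilon^e\operatorname{Tr}(F^{ab}\nabla_e F_{ab})+2\Upsilon^e\operatorname{Tr}(F^{ab}\nabla_e F_{ab})=0$. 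Beyond this, the principal hazard is purely bookkeeping: keeping the sign of the current anomaly and the endomorphism ordering under the trace consistent throughout. Once the four groups are seen to vanish, the pointwise identity $\Omega^6\mathcal A[\Omega^2g,A]=\mathcal A[g,A]$, and with it the conformal invariance of $\operatorname{En}$, is established.
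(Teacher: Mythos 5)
Your proposal is correct, and I verified the key coefficients: the current anomaly $\hat j_a=\Omega^{-2}(j_a+(d-4)\Upsilon^eF_{ea})$, the Laplacian contribution $-\tfrac14\hat\Delta(\Omega^{-4}\phi)=\Omega^{-6}\bigl(-\tfrac14\Delta\phi+\Upsilon^a\nabla_a\phi+(\nabla^a\Upsilon_a)\phi\bigr)$ for $\phi=\operatorname{Tr}(F^{ab}F_{ab})$, the Bianchi consequence $\operatorname{Tr}(F^{ab}\nabla_eF_{ab})=2\operatorname{Tr}(F^{ab}\nabla_aF_{eb})$, and the quadratic cancellation $(-4)+8+(-4)=0$ against the symmetric tensor $\Upsilon^e\Upsilon^f\operatorname{Tr}(F_e{}^aF_{fa})$ all check out. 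However, your route is genuinely different from the paper's: the paper offers no computation for this lemma at all, importing it as Theorem 1.9 of the companion work [GLWZ], where $\mathcal A$ was \emph{constructed} holographically as the conformal anomaly (the log-term coefficient) of the renormalized Yang--Mills energy on seven-dimensional Poincar\'e--Einstein manifolds, so that its conformal invariance is automatic from the anomaly argument rather than verified term by term. What the holographic derivation buys is systematics: it explains where the specific combination in \eqref{calE} comes from and, as the paper's Remark notes, produces analogous invariant integrands of derivative order $d-2$ in every even dimension, which a hand verification does not. What your direct computation buys is a self-contained, elementary proof of the \emph{pointwise} identity $\Omega^6\mathcal A[\Omega^2g,A]=\mathcal A[g,A]$, valid with no boundary caveat --- note that the machinery actually present in this paper (the transformation of $Q_2$ in its Lemma 2.2, the shift property \eqref{Qshift}, and the decomposition \eqref{anidentity}) would only yield invariance of the \emph{integral} on closed manifolds, since the terms $-\tfrac14\Delta\operatorname{Tr}(F^{ab}F_{ab})$ and $-\operatorname{Tr}\nabla_a(F^{ab}j_b)$ are divergences that contribute boundary terms; your pointwise cancellation is exactly what is needed for the with-boundary statement, and your observation that the two derivative-of-$F$ contractions only cancel after invoking $\nabla_{[a}F_{bc]}=0$ is the one genuinely non-mechanical step, correctly identified.
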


\begin{remark}
The method used in~\cite{GLWZ} to construct ${\mathcal A}$ in
\eqref{calE} also treats other dimensions and gives such invariant
integrands of derivative order $d-2$ on even dimensional
Poincar\'e--Einstein $d$-manifolds.
\end{remark}

Another  invariant energy is instead given in terms of a
$Q$-curvature-like quantity, due to Branson and
Gover~\cite{BG-Pont,GoPeSl}. This quantity will be key to constructing our
renormalized Yang--Mills energy.
It  relies on the operator defined in the following lemma.
\begin{lemma}
Let $(M^6,g)$ be  a Riemannian manifold and   $A$ a connection on a vector bundle~${\mathcal V}M$. Moreover suppose $X\in \Gamma(\wedge^2T^*M\otimes \End{\mathcal V}M)$ is $\nabla^A$-closed,  then
$$Q_2 X_{ab}:= \nabla_{[a} \nabla^{c} X_{b]c} 
+4 P_{[a}{}^c X_{b]c}+  J X_{ab}
$$
obeys
$$
\Omega^{2}\big[Q_2 X_{ab}\big]_{\Omega^2 g} =Q_2X_{ab}
-3\nabla^c \nabla_{[c} \big(X_{ab]}\log\Omega\big)\, ,
$$
where $\Upsilon:= \ext \log \Omega$, for any $0<\Omega\in C^\infty M$.
\end{lemma}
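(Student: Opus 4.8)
The plan is to verify the transformation by directly tracking how each term of $Q_2$ behaves under $g\mapsto\hat g=\Omega^2g$, using the standard conformal-change formulae together with the hypothesis that $X$ is $\nabla^A$-closed. Write $\Upsilon_a=\nabla_a\log\Omega$, so $\Upsilon=\ext\log\Omega$. Since the bundle connection $A$ is fixed, $\nabla^A$ is conformally invariant and only the Levi-Civita part of the coupled connection changes, through the difference tensor $\hat\Gamma^c_{ab}-\Gamma^c_{ab}=\delta^c_a\Upsilon_b+\delta^c_b\Upsilon_a-g_{ab}\Upsilon^c$. I will also use $\hat P_{ab}=P_{ab}-\nabla_a\Upsilon_b+\Upsilon_a\Upsilon_b-\tfrac12\Upsilon^c\Upsilon_c\,g_{ab}$ and, in dimension six, $\hat J=\Omega^{-2}(J-\nabla^c\Upsilon_c-2\,\Upsilon^c\Upsilon_c)$.

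First I would compute the divergence $\hat\nabla^cX_{bc}$. Feeding the difference tensor into $\hat\nabla_dX_{bc}$ and contracting with $\hat g^{cd}=\Omega^{-2}g^{cd}$, the antisymmetry of $X$ (in particular $X^c{}_c=0$) collapses the result to $\hat\nabla^cX_{bc}=\Omega^{-2}\big(\nabla^cX_{bc}+2\,\Upsilon^cX_{bc}\big)$, the coefficient being $d-4$ in general. Setting $Z_b:=\nabla^cX_{bc}+2\Upsilon^cX_{bc}$, I then compute the skew second derivative. Because the skew part of the Levi-Civita correction on a $1$-form cancels, $\hat\nabla_{[a}(\,\cdot\,)_{b]}$ acts as $\nabla_{[a}(\,\cdot\,)_{b]}$ on $\Omega^{-2}Z_b$, giving
\[
\hat\nabla_{[a}\hat\nabla^cX_{b]c}=\Omega^{-2}\big(\nabla_{[a}Z_{b]}-2\,\Upsilon_{[a}Z_{b]}\big).
\]

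Next I would assemble $\Omega^2[Q_2X_{ab}]_{\hat g}=\Omega^2\big(\hat\nabla_{[a}\hat\nabla^cX_{b]c}+4\hat P_{[a}{}^cX_{b]c}+\hat JX_{ab}\big)$, using $\hat P_a{}^c=\Omega^{-2}(P_a{}^c-\nabla_a\Upsilon^c+\Upsilon_a\Upsilon^c-\tfrac12\Upsilon^e\Upsilon_e\,\delta_a^c)$ and $\delta_{[a}^cX_{b]c}=-X_{ab}$. The quadratic $\Upsilon^c\Upsilon_c$ terms cancel between the trace-free Schouten piece and $\hat J$, as do the $\Upsilon_{[a}\Upsilon^cX_{b]c}$ terms, leaving $Q_2X_{ab}$ plus the anomaly
\[
-2(\nabla_{[a}\Upsilon^c)X_{b]c}+2\,\Upsilon^c\nabla_{[a}X_{b]c}-2\,\Upsilon_{[a}\nabla^cX_{b]c}-(\nabla^c\Upsilon_c)X_{ab}.
\]

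Finally I would show this anomaly equals $-3\nabla^c\nabla_{[c}(X_{ab]}\log\Omega)$. The $\nabla^A$-closedness $\nabla_{[a}X_{bc]}=0$ collapses the right-hand exterior derivative to $\nabla_{[c}(X_{ab]}\log\Omega)=\Upsilon_{[c}X_{ab]}$, so the target is $-3\nabla^c(\Upsilon_{[c}X_{ab]})=-\nabla^c(\Upsilon_cX_{ab}+\Upsilon_aX_{bc}+\Upsilon_bX_{ca})$. Expanding both expressions, the $-(\nabla^c\Upsilon_c)X_{ab}$ and $-2\Upsilon_{[a}\nabla^cX_{b]c}$ terms match outright, and the Hessian symmetry $\nabla^c\Upsilon_a=\nabla_a\Upsilon^c$, valid because $\Upsilon$ is exact, pairs the remaining $\nabla\Upsilon$ terms. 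The only leftover identity is $\Upsilon^c(\nabla_aX_{bc}-\nabla_bX_{ac}+\nabla_cX_{ab})=0$, which is precisely the closedness condition $\nabla_{[a}X_{bc]}=0$ contracted with $\Upsilon^c$. The main obstacle is purely organizational: all the content lives in the bookkeeping of the three-index antisymmetrizations and in applying the closedness hypothesis at the two places where it is needed — once to trivialize the right-hand side and once to reorganize the derivative-of-$X$ terms on the left.
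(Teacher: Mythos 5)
Your proposal is correct and follows essentially the same route as the paper: a direct term-by-term conformal variation of $\nabla_{[a}\nabla^cX_{b]c}$, $P_{[a}{}^cX_{b]c}$ and $JX_{ab}$, with the right-hand side collapsed to $-3\nabla^c(\Upsilon_{[c}X_{ab]})$ via $\nabla^A$-closedness and the Hessian symmetry $\nabla_{[a}\Upsilon_{b]}=0$. The only cosmetic differences are that you work mostly at $d=6$ (the paper keeps general $d$ and sets $d=6$ at the end) and that you package the divergence anomaly through the intermediate $Z_b$ and the conformal invariance of the skew derivative on one-forms; your intermediate anomaly and the final cancellation $\Upsilon^c(\nabla_aX_{bc}-\nabla_bX_{ac}+\nabla_cX_{ab})=0$ both check out.
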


\begin{proof}
This result was actually established in~\cite{BG-Pont,GoPeSl}, but we give a proof for completeness.
First note that the closedness conditions 
$\nabla_{[a} X_{bc]}=0$ 
and  $\nabla_{[a}\Upsilon_{b]}=0$,
 imply
\begin{align*}
\nabla^c \nabla_{[c} \big(X_{ab]}\log\Omega\big)&=
 \nabla^c \big(\Upsilon_{[c} X_{ab]})
 \\ &=\tfrac13 (\nabla.\Upsilon) X_{ab}+\tfrac13 \nabla_\Upsilon X_{ab}
 +\tfrac23 (\nabla_{[a} \Upsilon^c)X_{b]c}\
 +\tfrac23 \Upsilon_{[a} \nabla^cX_{b]c}\, .
\end{align*}
On the other hand, direct computations (in  $d$ dimensions) show that
\begin{eqnarray*}
\Omega^{{2}}\big[\nabla_{[a} \nabla^c 
\!\!\!\!\!&\!\!\!\!\!&\!\!\!\!\!\!\!\!\!\!\!\!
X_{b]c}\big]_{\Omega^2 g} -\, \nabla_{[a} \nabla^c X_{b]c}
\\
&=&
-2\Upsilon_{[a}\nabla^c X_{b]}{}_c
+g^{cd}\nabla_{[a}( \Upsilon^\hash_{|c|} X_{b]d})
-2g^{cd}\Upsilon_{[a} \Upsilon^\hash_{|c|} X_{b]d}\\
&=&-2 \Upsilon_{[a} \nabla^c X_{b]c}
-\tfrac{d-4}2\nabla_\Upsilon X_{ab}
+(d-4)\big[(\nabla_{[a}\Upsilon^c) X_{b]c}
-2\Upsilon_{[a}X_{b]\Upsilon}\big]\, ,
\end{eqnarray*}
while
\begin{eqnarray*}
\Omega^{2}\big[P_{[a}{}^c X_{b]c}\big]_{\Omega^2 g}
-P_{[a}{}^c X_{b]c}
&=&
-(\nabla_{[a}\Upsilon^c) X_{b]c} + \Upsilon_{[a} X_{b]\Upsilon}+\tfrac12 \Upsilon^2 X_{ab}
\, ,\\[1mm]
\Omega^{2}\big[J X_{ab}\big]_{\Omega^2 g}-J X_{ab}\quad\:&=&
-(\nabla.\Upsilon) X_{ab}-\tfrac{d-2}2\Upsilon^2 X_{ab} 
\, .
\end{eqnarray*}
In the above we used that 
the contorsion $K$ for the difference of two connections $\nabla^{\Omega^2 g}-\nabla^g $ is given by $K_{ab}{}^c = \Upsilon_a \delta_b{}^c + \Upsilon_b \delta_a^c - g_{ab} \Upsilon_g^c$. We have denoted by~$\Upsilon^\hash$ the operator $\nabla^{\Omega^2 g}-\nabla^g $, and used that
$\Upsilon^\hash_a g^{cd}=2 \Upsilon_a g^{cd}$
while $\Upsilon^\hash_c V^c=d \hh \Upsilon_c V^c$.
Setting $d=6$, the result now follows.
\end{proof}

The {\it Branson--Gover $Q$-curvature}
 of~\cite[Theorem 5.3]{BG-Pont}  (developed further in~\cite{GoPeSl}) is then defined by
\begin{equation}\label{Q}
Q[g,A]:=\operatorname{Tr}\big(
F^{ab}Q_2 F_{ab}\big)
=
\operatorname{Tr}\big(- F^{ab} \nabla_a j_b
+4 F^{ab} P_a{}^c F_{bc}+ J F^{ab}F_{ab}\big)\\
\, .
\end{equation}
By virtue of the above lemma, it obeys the linear shift property
\begin{equation}\label{Qshift}
\Omega^{6} Q[\Omega^2 g,A]=Q[g,A]-3
\operatorname{Tr} F^{ab} \nabla^c \nabla_{[c} (F_{ab]}\log \Omega)\, ,
\end{equation}
for any $0<\Omega\in C^\infty M$. The second term on the right hand side is in fact a total divergence because $F$ is identically $\nabla^A$-closed. Hence
$$
{\operatorname{En}}^\emptyset[g,A]:=
\frac14\int_M 
\ext {\rm Vol}(g)\,Q[g,A]
$$
is conformally invariant on closed manifolds, as exploited in~\cite{GoPeSl,GLWZ}.  
In fact it is linked to the energy of Lemma~\ref{pointE}.

\begin{lemma}\label{E2Q}
Let $(M,g)$ be a closed Riemannian $6$-manifold and $A$ a connection on a vector bundle ${\mathcal V}M$. Then
\begin{equation}\label{anidentity}
{\mathcal A}[g,A]= Q[g,A] +
\operatorname{Tr}\Big(-\frac14 \Delta(F^{ab}F_{ab})
- \nabla_a(F^{ab} j_b)\Big)\, .
\end{equation}
Thus,
 if $\partial M=\emptyset$,   
 $$
{\operatorname{En}} [g,A]= {\operatorname{En}}^\emptyset [g,A]\, .
$$
\end{lemma}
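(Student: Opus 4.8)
The plan is to prove the pointwise identity \eqref{anidentity} by directly comparing the two integrands and then to integrate over the closed manifold. First I would subtract the definition \eqref{Q} of $Q[g,A]$ from the definition \eqref{calE} of ${\mathcal A}[g,A]$. The terms $\operatorname{Tr}(4F^{ab}P_a{}^c F_{bc})$ and $\operatorname{Tr}(JF^{ab}F_{ab})$ occur identically in both and cancel, while the two $F\nabla j$ contributions combine as $-2F^{ab}\nabla_a j_b+F^{ab}\nabla_a j_b=-F^{ab}\nabla_a j_b$. This reduces the claim to
$$
{\mathcal A}[g,A]-Q[g,A]=\operatorname{Tr}\Big(-\tfrac14\Delta(F^{ab}F_{ab})-j^aj_a-F^{ab}\nabla_a j_b\Big)\, ,
$$
so that \eqref{anidentity} is equivalent to the single identity $\operatorname{Tr}(-j^aj_a-F^{ab}\nabla_a j_b)=\operatorname{Tr}(-\nabla_a(F^{ab}j_b))$.

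The crux is the observation that the current of \eqref{ymeq} is nothing but the divergence of the curvature: unwinding indices and using metric compatibility gives $\nabla_a F^{ab}=g^{bd}\nabla^c F_{cd}=j^b$, an identity that holds for \emph{any} connection (the Yang--Mills equation is not invoked here). Applying the Leibniz rule for the coupled connection to the composition of endomorphisms $F^{ab}j_b$ then yields
$$
\nabla_a(F^{ab}j_b)=(\nabla_a F^{ab})\,j_b+F^{ab}\nabla_a j_b=j^bj_b+F^{ab}\nabla_a j_b\, ,
$$
and taking the trace of the negative of this expression reproduces exactly the bracket above; the composition order matches the term already present in ${\mathcal A}$, so no reordering is required. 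This establishes \eqref{anidentity}.

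For the concluding statement I would integrate \eqref{anidentity} against $\ext{\rm Vol}(g)$ over $M$. Since the induced connection on $\End{\mathcal V}M$ preserves the trace, one has $\operatorname{Tr}(\nabla_a X)=\nabla_a\operatorname{Tr}(X)$ for every endomorphism $X$; hence $\operatorname{Tr}\big(\Delta(F^{ab}F_{ab})\big)=\Delta\operatorname{Tr}(F^{ab}F_{ab})$ is the scalar Laplacian of a function, and $\operatorname{Tr}\big(\nabla_a(F^{ab}j_b)\big)=\nabla_a\operatorname{Tr}(F^{ab}j_b)$ is a genuine vector-field divergence. On a closed manifold both integrate to zero by the divergence theorem, the correction term drops out, and $\int_M\ext{\rm Vol}(g)\,{\mathcal A}[g,A]=\int_M\ext{\rm Vol}(g)\,Q[g,A]$, i.e. $\operatorname{En}[g,A]=\operatorname{En}^\emptyset[g,A]$.

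I do not expect a genuine obstacle: the statement is essentially a bookkeeping identity. The only points needing care are the index manipulation identifying $\nabla_a F^{ab}$ with $j^b$ while respecting the antisymmetry $F^{ab}=-F^{ba}$, and the fact that, because the endomorphisms need not commute, the Leibniz expansion must be carried out with its composition order intact before the trace is taken.
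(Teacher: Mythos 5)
Your proposal is correct and takes essentially the same route as the paper: both reduce \eqref{anidentity} to the cancellation of the $P$- and $J$-terms plus the key identity $\operatorname{Tr}\nabla_a(F^{ab}j_b)=\operatorname{Tr}\big(j^aj_a+F^{ab}\nabla_a j_b\big)$, which follows from $\nabla_aF^{ab}=j^b$ and the Leibniz rule. The only difference is that you make explicit the final step (trace commutes with the coupled connection, so the correction terms are a genuine scalar Laplacian and divergence that vanish upon integration over the closed manifold), which the paper leaves implicit.
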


\begin{proof}
From Equations~\nn{Q} and~\nn{calE} we have
\begin{align*}
Q+ \operatorname{Tr} F^{ab} \nabla_a j_b
&=
 \operatorname{Tr}\big(4 F^{ab} P_a{}^c F_{bc}+ J F^{ab}F_{ab}\big)\\
&  ={\mathcal A} +   \operatorname{Tr} \big(
 2F^{ab} \nabla_a j_b + j^a j_a +\frac14\Delta(F^{ab}F_{ab}) \big)\, .
\end{align*}
The result now follows upon using that
$$
 \operatorname{Tr} \nabla_a(F^{ab} j_b)
= \operatorname{Tr} \big(
j^aj_a + F^{ab} \nabla_a j_b
\big)\, .
$$
\end{proof}

There are two further obvious pointwise conformal invariants, namely $\operatorname{Tr}
F^{ab} W^c{}_{ab}{}^d F_{dc}$ and $\operatorname{Tr} F^{ab} F^{c}{}_a
F_{bc}$, that can be integrated invariantly over a six-manifold.  A
combination of these obey a Weitzenb\"ock-type formula.
\begin{lemma}\label{WBL}
Let  $A$ be a vector bundle connection on $(M^6,g)$. Then
\begin{multline}\label{WB}
F^{ab} W^c{}_{ab}{}^d F_{dc}
+2 F^{ab} F^{c}{}_a F_{bc}
=\\
-\frac14\Delta(F^{ab}F_{ab})
+\frac12(\nabla^c F^{ab})\nabla_c F_{ab}
+F^{ab}  \nabla_a j_{b}
-2F^{ab} P_a{}^cF_{bc}+
JF^{ab} F_{ab}\, .
\end{multline}

\end{lemma}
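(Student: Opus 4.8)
The plan is to read the right-hand side of \eqref{WB} as a contracted Bochner--Weitzenb\"ock formula for the rough Laplacian of the curvature two-form, and to verify it by a direct computation. The first move is to dispose of the two purely analytic terms. Expanding the rough Laplacian of the endomorphism $F^{ab}F_{ab}$ by the Leibniz rule gives
$$
\tfrac12\Delta(F^{ab}F_{ab})=\tfrac12(\Delta F^{ab})F_{ab}+(\nabla^cF^{ab})(\nabla_cF_{ab})+\tfrac12 F^{ab}(\Delta F_{ab})\,,
$$
so that the combination appearing on the right of \eqref{WB} collapses to
$$
-\tfrac14\Delta(F^{ab}F_{ab})+\tfrac12(\nabla^cF^{ab})\nabla_cF_{ab}=-\tfrac14(\Delta F^{ab})F_{ab}-\tfrac14 F^{ab}(\Delta F_{ab})\,.
$$
The asserted identity is thereby reduced to an expression for the symmetrized second derivative $(\Delta F^{ab})F_{ab}+F^{ab}(\Delta F_{ab})$ in terms of $j$, $P$, $J$, $W$ and cubic contractions of $F$ alone.

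The remaining ingredient is the Weitzenb\"ock formula for $\Delta F_{ab}=\nabla^c\nabla_cF_{ab}$ itself. Since $F$ is $\nabla^A$-closed, the second Bianchi identity $\nabla_{[a}F_{bc]}=0$ permits the rewriting $\nabla_cF_{ab}=\nabla_aF_{cb}+\nabla_bF_{ac}$; substituting this and commuting the two covariant derivatives yields, using $\nabla^cF_{cb}=j_b$, the divergence terms $\nabla_aj_b-\nabla_bj_a$ together with the two curvature commutators $[\nabla^c,\nabla_a]F_{cb}$ and $[\nabla^c,\nabla_b]F_{ac}$. Each commutator splits into a Riemannian part acting on the form indices and a bundle part acting by the adjoint $X\mapsto[F_{ab},X]$, the latter being the origin of the cubic-in-$F$ terms. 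Inserting the curvature decomposition $R_{abcd}=W_{abcd}+g_{ac}P_{bd}-g_{bc}P_{ad}+g_{bd}P_{ac}-g_{ad}P_{bc}$ together with $\operatorname{Ric}=(d-2)P+gJ$ specialized to $d=6$, the Riemannian contractions reorganize into precisely the Weyl term $F^{ab}W^c{}_{ab}{}^dF_{dc}$ plus Schouten and scalar corrections.

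I would then contract this Weitzenb\"ock formula by $F^{ab}$ once on the left and once on the right, as dictated by the symmetrized combination above, and collect terms. The main obstacle is the bookkeeping of the cubic contractions: because \eqref{WB} is asserted at the level of endomorphisms and not merely under a trace, the distinct orderings of $F^{ab}F^c{}_aF_{bc}$ produced by the adjoint action do not commute, and it is exactly the symmetric pairing of left and right multiplication by $F$, supplemented by the antisymmetry $F_{ab}=-F_{ba}$ and relabelling of dummy indices, that fuses them into the single term $2F^{ab}F^c{}_aF_{bc}$. A secondary check is that, after the $d=6$ specialization, the Ricci and Schouten contractions deliver exactly the coefficients $-2$ and $+1$ of $F^{ab}P_a{}^cF_{bc}$ and $JF^{ab}F_{ab}$; verifying these numerical coefficients, and the cancellation of all other potential contractions, is routine once the structural reduction above is in place.
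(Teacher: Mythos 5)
Your proposal is correct and follows essentially the same route as the paper's proof: Leibniz expansion of $\Delta(F^{ab}F_{ab})$ to isolate $F\cdot\Delta F$, the Bianchi identity to convert $\Delta F_{ab}$ into $-2\nabla^c\nabla_aF_{bc}$ (after contraction with $F^{ab}$), commutation of derivatives producing $\nabla_aj_b$, the Riemannian curvature acting on form indices, the bundle commutator $[F^c{}_a,F_{bc}]$ giving the cubic term, and finally the Weyl--Schouten decomposition at $d=6$. Your only deviation is the symmetrized left/right contraction by $F^{ab}$, which is a slightly more careful treatment of endomorphism orderings than the paper's computation (which contracts from the left and implicitly identifies orderings as under the trace, where the lemma is ultimately applied); this changes nothing of substance.
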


\begin{proof}
We compute  directly as follows..
\begin{align*}
\frac12\Delta(F^{ab}F_{ab})
&-(\nabla^c F^{ab})\nabla_c F_{ab}
= F^{ab} \Delta F_{ab}
=-2F^{ab} \nabla^c \nabla_a F_{bc}\\
&=2F^{ab}  \nabla_a j_{b}
-2F^{ab} R^c{}_{a}{}^\# F_{bc}
-2F^{ab} [F^c{}_a ,F_{bc}]
\\
&=2F^{ab}  \nabla_a j_{b}
-2F^{ab} R^c{}_{ab}{}^d F_{dc}
-2F^{ab} R^c{}_{ac}{}^d F_{bd}
-4 F^{ab} F^{c}{}_a F_{bc}
\\
&=2F^{ab}  \nabla_a j_{b}
-2F^{ab} W^c{}_{ab}{}^d F_{dc}
-4F^{ab} P_a{}^cF_{bc}
+2JF^{ab} F_{ab}
-4 F^{ab} F^{c}{}_a F_{bc}
\, .
\end{align*}
\end{proof}

We can use the above identity to show that the pointwise invariant
${\mathcal A}$ equals the renormalized energy integrand
${\mathcal E}_{\rm ren}$ of Theorem~\ref{main}.

\begin{lemma}\label{Q2ren}
Let $(M,g)$ be a  Riemannian $6$-manifold, possibly with boundary,  and $A$ a connection on a vector bundle ${\mathcal V}M$.
 Then 
$$
{\mathcal A} = {\mathcal E}_{\rm ren} \, .
$$
\end{lemma}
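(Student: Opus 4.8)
The plan is to identify the two scalars by a single algebraic substitution, exploiting that $\mathcal A$ of \eqref{calE} and $\mathcal E_{\rm ren}$ of \eqref{Eren} are assembled from the same short list of natural invariants under the trace, namely $\operatorname{Tr}(j^aj_a)$, $\operatorname{Tr}\Delta(F^{ab}F_{ab})$, $\operatorname{Tr}(\nabla^cF^{ab})\nabla_cF_{ab}$, $\operatorname{Tr}(F^{ab}P_a{}^cF_{bc})$, $\operatorname{Tr}(JF^{ab}F_{ab})$, $\operatorname{Tr}(F^{ab}\nabla_aj_b)$, together with the two invariants $\operatorname{Tr}(F^{ab}W^c{}_{ab}{}^dF_{dc})$ and $\operatorname{Tr}(F^{ab}F^{c}{}_aF_{bc})$. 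The only structural difference between the two expressions is that $\mathcal E_{\rm ren}$ displays the Weyl and cubic terms explicitly, whereas $\mathcal A$ instead carries the gradient-squared and rough-Laplacian terms; the Weitzenb\"ock-type identity \eqref{WB} of Lemma~\ref{WBL} is precisely the bridge relating these two groupings.

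Concretely, I would start from the definition \eqref{Eren} and replace the combination $\operatorname{Tr}(F^{ab}W^c{}_{ab}{}^dF_{dc}+2F^{ab}F^{c}{}_aF_{bc})$ by the right-hand side of \eqref{WB}. After this substitution the two copies of $\operatorname{Tr}(\nabla^cF^{ab})\nabla_cF_{ab}$ cancel exactly, appearing with opposite coefficients $\mp\tfrac12$; the Schouten terms combine as $6-2=4$; the divergence terms combine as $-3+1=-2$; and the freshly produced $-\tfrac14\operatorname{Tr}\Delta(F^{ab}F_{ab})$ and $\operatorname{Tr}(JF^{ab}F_{ab})$ terms enter with exactly the coefficients demanded by \eqref{calE}, while $-\operatorname{Tr}(j^aj_a)$ is untouched. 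Collecting the survivors yields $\operatorname{Tr}\big(-\tfrac14\Delta(F^{ab}F_{ab})-j^aj_a-2F^{ab}\nabla_aj_b+4F^{ab}P_a{}^cF_{bc}+JF^{ab}F_{ab}\big)$, which is $\mathcal A$ term by term.

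There is essentially no analytic obstacle: the content is entirely pointwise and algebraic, and it is carried by Lemma~\ref{WBL}. The only point requiring care is the bookkeeping of coefficients and signs under the trace, in particular confirming that the gradient-squared terms genuinely cancel rather than reinforce, which is what makes the two cubic-in-$F$ expressions coincide \emph{pointwise} and not merely modulo a divergence. Because no further integration by parts is needed beyond what is already encoded in \eqref{WB}, the argument is insensitive to the presence of a boundary, consistent with the statement allowing $\partial M\neq\emptyset$.
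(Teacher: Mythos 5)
Your proposal is correct and is essentially the paper's own proof: the paper likewise proves the identity by a one-line application of the Weitzenb\"ock formula \eqref{WB} of Lemma~\ref{WBL}, merely running the substitution in the opposite direction (eliminating $-\tfrac14\operatorname{Tr}\Delta(F^{ab}F_{ab})$ from \eqref{calE} rather than the Weyl-plus-cubic combination from \eqref{Eren}), which is the same algebraic content. Your coefficient bookkeeping ($-\tfrac12+\tfrac12=0$, $6-2=4$, $-3+1=-2$) checks out exactly.
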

\begin{proof}
Employ the Weitzenb\"ock formula~\nn{WB} to eliminate the term $-\frac14 \operatorname{Tr}
 \Delta(F^{ab}F_{ab})$ in Equation~\nn{calE}.
\end{proof}

\section{The Inhomogeneous Schr\"odinger Equation}\label{ise}

We are interested in finding  a defining function $\tilde \sigma$ such that the compactified metric $$\tilde g:=\tilde\sigma^2 g_+$$  is  $Q$-curvature flat, or at least has zero total $Q$-curvature. We first we need a suitable equation for $\tilde \sigma$; this is the aim of the current section. 

\smallskip

We rely on
 two properties of the  $Q$-curvature peculiar to Yang--Mills connections on Poincar\'e--Einstein manifolds. 
\begin{lemma}\label{thenewone}
Let $(M_+,g_+)$ be a six dimensional  Poincar\'e--Einstein manifold and $A$
 be a smooth vector bundle connection on $M$ solving the Yang--Mills equation 
$
j[A,g_+]=0
$
on the interior $M_+$.
Then 
$$
Q[g_+,A]=|F|^2_{g_+}\, .
$$
\end{lemma}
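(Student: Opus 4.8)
The plan is to evaluate the defining expression for the Branson--Gover $Q$-curvature in Equation~\eqref{Q} term by term, exploiting the two special features of the present setting: the Yang--Mills equation and the negative-scalar-curvature Einstein condition. Recall from~\eqref{Q} that
$$
Q[g_+,A]=\operatorname{Tr}\big(-F^{ab}\nabla_a j_b+4F^{ab}P_a{}^cF_{bc}+JF^{ab}F_{ab}\big),
$$
while by definition $|F|^2_{g_+}=-\operatorname{Tr}(F^{ab}F_{ab})$. So the whole task is to show that, under the stated hypotheses, the three contributions inside the trace collapse to $-F^{ab}F_{ab}$.

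First I would dispose of the current term. Since $A$ solves $j[A,g_+]=0$ on $M_+$, the Yang--Mills current $j_a$ vanishes identically there, and hence $\nabla_a j_b=0$; this kills the first term. Next I would insert the Poincar\'e--Einstein relation~\eqref{P+}, namely $P^{g_+}=-\tfrac12 g_+$, which gives $P_a{}^c=-\tfrac12\delta_a^c$ and, in bulk dimension $d=6$, $J=P_a{}^a=-3$. Substituting into the Schouten term and using the antisymmetry of the curvature (so that $F^{ab}F_{ba}=-F^{ab}F_{ab}$) yields $4F^{ab}P_a{}^cF_{bc}=-2F^{ab}F_{ba}=2F^{ab}F_{ab}$, while the trace term becomes $JF^{ab}F_{ab}=-3F^{ab}F_{ab}$.

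Assembling these, I obtain $Q[g_+,A]=\operatorname{Tr}(2F^{ab}F_{ab}-3F^{ab}F_{ab})=-\operatorname{Tr}(F^{ab}F_{ab})=|F|^2_{g_+}$, which is the claim. There is no genuine obstacle here: the proof is purely a matter of feeding the two defining relations $j=0$ and $P=-\tfrac12 g_+$ into~\eqref{Q} and tracking the signs forced by the antisymmetry of $F$. The only point demanding minor care is the coefficient arising from the trace of the Schouten tensor, where one must use that $J$ scales with the bulk dimension $d=6$ rather than with the boundary dimension, so that the cancellation between the $4P$ and $J$ contributions leaves precisely a single copy of $-F^{ab}F_{ab}$.
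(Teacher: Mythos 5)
Your proposal is correct and is exactly the paper's argument: the proof there is the one-line instruction to apply $j[A,g_+]=0$ and the Einstein relation~\eqref{P+} to the definition~\eqref{Q}, which is precisely the substitution $P_a{}^c=-\tfrac12\delta_a^c$, $J=-3$ that you carry out, with the signs from antisymmetry of $F$ handled correctly. Nothing to add.
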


\begin{proof}
Apply $j[A,g_+]=0$ and Equation~\nn{P+} to Equation~\nn{Q}.
\end{proof}

\begin{lemma}\label{shift}
Let $(M_+,g_+)$ be a six dimensional  Poincar\'e--Einstein manifold and 
$A$ be a  smooth vector bundle connection on $M$ solving the Yang--Mills equation 
$
j[A,g_+]=0
$
on the interior $M_+$. Then, for any $f\in C^\infty M_+$,  
\begin{equation}\label{Qshifty}
e^{6f}Q[e^{2f}g_+,A]=|F|_{g_+}^2 +  |F|_{g_+}^2 \Delta^{g_+} f + (\nabla^a_{g_+} |F|^2_{g_+})\nabla_a^{g_+} f
-2 \operatorname{Tr}\big(\nabla^c_{g_+}(F^{ab}_{g_+} F_{ca}  \nabla_{b}f)\big)\, .
\end{equation}
\end{lemma}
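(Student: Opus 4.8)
The plan is to deduce the formula directly from the general conformal shift property \eqref{Qshift} of the Branson--Gover $Q$-curvature. Setting $\Omega = e^f$ there (so that $\log\Omega = f$ and $\Omega^2 g_+ = e^{2f}g_+$) and invoking Lemma~\ref{thenewone} to replace $Q[g_+,A]$ by $|F|^2_{g_+}$, the whole problem collapses to re-expressing the single ``anomaly'' term $-3\operatorname{Tr}F^{ab}\nabla^c\nabla_{[c}(F_{ab]}f)$ as the three explicit terms on the right-hand side of \eqref{Qshifty}. All operations below are taken with respect to $g_+$.

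The two workhorses are the Bianchi identity $\nabla_{[c}F_{ab]}=0$ (valid because $F$ is $\nabla^A$-closed) and cyclicity of the trace. First I would use Bianchi to write $\nabla_{[c}(F_{ab]}f) = \Upsilon_{[c}F_{ab]}$, where $\Upsilon_a := \nabla_a f$, the $f\hh\nabla_{[c}F_{ab]}$ piece dropping out. Next, and this is the conceptual key that avoids a messier Hessian computation, I would observe that the whole anomaly term is in fact a pure divergence: pulling the outer $\nabla^c$ through $F^{ab}$ via the Leibniz rule produces the cross-term $\operatorname{Tr}(\nabla^c F^{ab})\Upsilon_{[c}F_{ab]}$, and since the factor multiplying $\nabla^c F^{ab}$ is totally antisymmetric in $c,a,b$, only $\nabla^{[c}F^{ab]}$ survives, which vanishes by Bianchi once more. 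Hence $-3\operatorname{Tr}F^{ab}\nabla^c\nabla_{[c}(F_{ab]}f) = -3\operatorname{Tr}\nabla^c\big(F^{ab}\Upsilon_{[c}F_{ab]}\big)$.

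It then remains to expand the $3$-form $\Upsilon_{[c}F_{ab]}$ into its cyclic pieces, contract with $F^{ab}$, and combine the two terms proportional to $\Upsilon_a F^{ab}F_{bc}$, which coincide after an $a\leftrightarrow b$ relabeling using the antisymmetry of $F$ and cyclicity of the trace. Using $\operatorname{Tr}(F^{ab}F_{ab}) = -|F|^2_{g_+}$, the resulting expression splits into two divergences: the term $\nabla^c(|F|^2_{g_+}\nabla_c f)$, which upon Leibniz yields precisely $|F|^2_{g_+}\Delta^{g_+}f + (\nabla^a_{g_+}|F|^2_{g_+})\nabla_a^{g_+} f$, and a second divergence that, after one further $a\leftrightarrow b$ relabeling, becomes exactly $-2\operatorname{Tr}\nabla^c(F^{ab}_{g_+}F_{ca}\nabla_b f)$. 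Adding back the $|F|^2_{g_+}$ coming from Lemma~\ref{thenewone} then reproduces \eqref{Qshifty}.

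The only genuine obstacle is clerical rather than conceptual: keeping the index gymnastics and the signs generated by the antisymmetry of $F$ consistent when combining the cyclic terms and matching the final divergence to the stated form. It is worth emphasizing that the Yang--Mills hypothesis $j[A,g_+]=0$ enters solely through Lemma~\ref{thenewone} (via $Q[g_+,A]=|F|^2_{g_+}$); the rewriting of the anomaly term as the two divergences above is an identity valid for an arbitrary bundle connection, relying only on the Bianchi identity and cyclicity of the trace.
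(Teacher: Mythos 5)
Your proof is correct, and I checked the key step: since $\Upsilon_{[c}F_{ab]}$ is totally antisymmetric in $c,a,b$, the cross term $\operatorname{Tr}\big((\nabla^c F^{ab})\,\Upsilon_{[c}F_{ab]}\big)$ indeed picks out only $\nabla^{[c}F^{ab]}$, which vanishes by the Bianchi identity, so the anomaly term collapses to the pure divergence $-3\operatorname{Tr}\nabla^c\big(F^{ab}\Upsilon_{[c}F_{ab]}\big)$; expanding the cyclic pieces then gives $\nabla^c(|F|^2\nabla_c f)-2\operatorname{Tr}\nabla^c(F^{ab}F_{ca}\nabla_b f)$ with the correct signs under the convention $|F|^2=-\operatorname{Tr}(F^{ab}F_{ab})$. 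This is organized genuinely differently from the paper's proof. There, the antisymmetrized divergence is expanded head-on, and the Yang--Mills equation is invoked twice inside the computation: once to discard the current terms $F^{ab}(\nabla^c F_{bc})\nabla_a f$ at the Hessian stage, and again when reassembling $-2\operatorname{Tr}(F^{ab}F_{ca}\nabla^c\nabla_b f)$ into a divergence plus a cross term; the Bianchi identity is then used a second time at the very end to convert that cross term $2\operatorname{Tr}\big((\nabla^c F^{ab})F_{ca}\nabla_b f\big)$ into the missing $\tfrac12(\nabla^c|F|^2)\nabla_c f$, the full coefficient arising as $\tfrac12+\tfrac12$. Your route avoids all second-derivative manipulations, produces the full coefficient $(\nabla^c|F|^2)\nabla_c f$ in one stroke, and --- as you correctly emphasize --- establishes the divergence identity for an arbitrary connection, with $j[A,g_+]=0$ entering only through Lemma~\ref{thenewone}: in the general case the $j$-dependent pieces are simply absorbed inside the divergence $\operatorname{Tr}\nabla^c(F^{ab}F_{ca}\nabla_b f)$, whereas the paper's intermediate lines are valid only on Yang--Mills connections. (Your pure-divergence observation is in fact the same mechanism the paper records just after Equation~\eqref{Qshift} to justify conformal invariance of the total $Q$-curvature; you have promoted it from a remark to the engine of the proof, which is a mild but real gain in generality and economy.)
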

\begin{proof}
Here all index manipulations and connections are those of the
Poincar\'e--Einstein metric $g_+$, and we rely on the Yang--Mills
equation $\nabla^a_{g_{+}}F_{ab}=0$.
Using Equation~\nn{Qshift} we compute as follows.
\begin{align*}
e^{6f}Q[e^{2f}g_+,A]\!\!&-
Q[g_+,A]=
-3  \operatorname{Tr}\big(F^{ab}
\nabla^c \nabla_{[c}
(F_{ab]}  f)\big)
=-3  \operatorname{Tr}\big(F^{ab} \nabla^c (F_{[ab} \nabla_{c]} f)\big)\\
&=  |F|^2 \Delta f + \frac12(\nabla^c |F|^2)\nabla_c f
-2  \operatorname{Tr}\big(F^{ab} F_{ca} \nabla^c \nabla_{b}f\big)\\
&=   |F|^2 \Delta f +\frac12 (\nabla^c |F|^2)\nabla_c f
+2 \operatorname{Tr}\big((\nabla^c F^{ab}) F_{ca}\nabla_b f
- \nabla^c(F^{ab} F_{ca}  \nabla_{b}f)\big)\\
&=   |F|^2 \Delta f + \frac12(\nabla^c |F|^2)\nabla_c f
-\operatorname{Tr}\big((\nabla^b F^{ca}) F_{ca}\nabla_b f
+2 \nabla^c(F^{ab} F_{ca}  \nabla_{b}f)\big)\\
&=   |F|^2 \Delta f + (\nabla^c |F|^2)\nabla_c f
-2 \operatorname{Tr} \nabla^c(F^{ab} F_{ca}  \nabla_{b}f)\, .
\end{align*}
The fourth line above employed the Bianchi identity $\nabla^{[c} F^{ab]}=0$. The result then follows from Lemma~\nn{thenewone}.
\end{proof}

The last term on the right hand side of Equation~\nn{Qshifty} is a total divergence that, it will turn out, does not contribute to the total $Q$-curvature for suitable $\tilde g$. Hence we wish to solve
\begin{equation}
\label{farfarabove}
  |F|_{g_+}^2 \Delta^{g_+} f + (\nabla_{g_+}^a |F|^2_{g_+})\nabla^{g_+}_a f+|F|_{g_+}^2=0\, .
\end{equation}
To treat this we henceforth assume that $|F|_{g_+}$ is nowhere vanishing.
 Let us now consider the following modification of the Laplacian of $g_+$ acting on functions, 
\begin{equation}\label{selfadjoint}
\Delta^{g_+,A} := \frac{1}{\: |F|_{g_+}} \nabla_a^{g_+} \circ g_+^{ab} |F|_{g_+}^2 
\nabla_b^{g_+} \circ \frac{1}{\:|F|_{g_+}}\, .
\end{equation}
By construction, $\Delta^{g_+,A} $ is formally self adjoint with respect to the volume form $\ext {\rm Vol}(g_+)
$. This allows us to rewrite Equation~\nn{farfarabove} in a more tractable form, namely that of an inhomogeneous Schr\"odinger equation.
\begin{lemma}
Let $(M_+,g_+)$ be a six dimensional  Poincar\'e--Einstein manifold and 
$A$ be a smooth vector bundle  connection on $M$. Moreover suppose that on $M_+$ we have that $|F|_{g_+}$ is nowhere vanishing and the Yang--Mills equation 
$
j[A,g_+]=0
$
holds.
 If $f^{{\prime}}\in C^\infty M_+$ obeys \label{add pluses to indicate $g_+$ metric, inhomogeneous Schr\"odinger}
\begin{equation}\label{FROGGO}
\Delta^{g_+,A} f^{\prime} = -|F|_{g_+}\, ,
\end{equation}
then 
\begin{equation}\label{nearlyzero}
e^{6f}Q[e^{2f}g_+,A]=-2 \nabla^c_{g_+}\!
\operatorname{Tr}(F^{ab}_{g_+} F_{ca}  \nabla_{b}f)\, ,
\end{equation}
where $f=f^{\prime}/|F|_{g_+} $.
\end{lemma}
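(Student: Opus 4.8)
The plan is to reduce the claim to an algebraic identification of the inhomogeneous Schr\"odinger equation~\nn{FROGGO} with Equation~\nn{farfarabove}, after which the conclusion is immediate from Lemma~\ref{shift}. Throughout, exactly as in the proof of Lemma~\ref{shift}, all connections and index manipulations are those of $g_+$, and I suppress the corresponding subscripts. Lemma~\ref{shift} expresses $e^{6f}Q[e^{2f}g_+,A]$ as the sum of the three undifferentiated terms $|F|^2+|F|^2\Delta f+(\nabla^a|F|^2)\nabla_a f$ together with the single total divergence $-2\operatorname{Tr}\nabla^c(F^{ab}F_{ca}\nabla_b f)$. Hence the desired Equation~\nn{nearlyzero} holds precisely when those three undifferentiated terms sum to zero, that is, precisely when $f$ solves Equation~\nn{farfarabove}. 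The proof therefore reduces to checking that $f=f'/|F|_{g_+}$ solves~\nn{farfarabove} whenever $f'$ solves~\nn{FROGGO}.

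To verify this, I would first unwind the definition~\nn{selfadjoint} of the conjugated operator $\Delta^{g_+,A}$ applied to $f'=f\,|F|_{g_+}$. Because $|F|_{g_+}$ is nowhere vanishing on $M_+$, the factor $1/|F|_{g_+}$ is smooth and positive there, and the inner conjugating factor cancels against the $|F|_{g_+}$ inside $f'$, leaving
$$
\Delta^{g_+,A}f'=\frac{1}{|F|}\,\nabla_a\big(|F|^2\nabla^a f\big).
$$
Expanding the divergence by the Leibniz rule yields $\tfrac{1}{|F|}\big(|F|^2\Delta f+(\nabla^a|F|^2)\nabla_a f\big)$. Imposing~\nn{FROGGO}, i.e. equating this expression to $-|F|$, and clearing the common factor $|F|$, reproduces exactly Equation~\nn{farfarabove}.

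With this identification in hand, I would substitute Equation~\nn{farfarabove} into the formula supplied by Lemma~\ref{shift}: the three undifferentiated terms cancel identically, leaving precisely the total divergence $-2\operatorname{Tr}\nabla^c(F^{ab}F_{ca}\nabla_b f)$. Since the fibre trace commutes with the metric-compatible base derivative $\nabla^c$, this equals $-2\nabla^c\operatorname{Tr}(F^{ab}F_{ca}\nabla_b f)$, which is Equation~\nn{nearlyzero}.

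The computation presents no genuine obstacle; the only point demanding care is the conjugation step. One must recognize that the self-adjoint form~\nn{selfadjoint}, applied to the rescaled unknown $f'=f\,|F|$, has its inner $1/|F|$ cancel against $f'$ while the outer $1/|F|$ survives, so that $\Delta^{g_+,A}$ acts on $f'$ exactly as the weighted Laplacian $|F|^{-1}\nabla_a(|F|^2\nabla^a\,\cdot\,)$ acts on $f$; and the nowhere-vanishing hypothesis on $|F|_{g_+}$ is precisely what renders every step licit on all of $M_+$. All remaining content is already packaged in Lemma~\ref{shift} and in the self-adjoint normalization built into~\nn{selfadjoint}.
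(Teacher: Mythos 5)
Your proposal is correct and follows essentially the same route as the paper: both reduce the claim to Lemma~\ref{shift} and then identify Equation~\eqref{FROGGO} with Equation~\eqref{farfarabove} by unwinding the conjugated form~\eqref{selfadjoint} of $\Delta^{g_+,A}$ applied to $f'=|F|_{g_+}f$, where the inner factor $1/|F|_{g_+}$ cancels. Your version merely makes the Leibniz-rule expansion of the weighted Laplacian slightly more explicit than the paper's one-line rewriting of the operator, with no substantive difference.
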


\begin{proof}
By Lemma~\ref{shift},
requiring that $f$ obeys Equation~\nn{farfarabove}
implies the desired Equation~\nn{nearlyzero}.
But we can rewrite the operator $\Delta^{g_+,A}$ as
$$
\Delta^{g_+,A} = \Big(|F|_{g_+}
\Delta^{g_+}
 + \frac{1}{\: |F|_{g_+}} 
 (\nabla^a_{g_+} |F|^2_{g_+})\nabla_a^{g_+}
 \Big)\circ  \frac{1}{\: |F|_{g_+}} 
 \, ,
$$
so Equation~\nn{farfarabove} for $f$ becomes
$$
 \Delta^{g_+,A}\Big( |F|_{g_+}f\Big) = -|F|_{g_+} \, .
$$
Setting  $f^\prime=|F|_{g_+}f$ gives the stated result.
\end{proof}

In fact, Equation~\nn{FROGGO} is closely related to problems already addressed in the literature~\cite{MM,GrZw,BaJo}.
A simple computation establishes that 
 \begin{equation}\label{thecat}
\Delta^{g_+,A}=
\Delta^{g_+} -\hh\frac{(\Delta_{g_+}|F|_{g_+})}{|F|_{g_+}}\, .
\end{equation}
The first term is the well-studied Laplace operator on a Poincar\'e--Einstein manifold while the second can be adjusted (in any dimension $d$) by a constant to yield a bounded short-range potential, as shown in the following. 
\begin{lemma}\label{thepot} 
  Let $(M_+,g_+)$ be a Poincar\'e--Einstein manifold and $A$ be a smooth vector bundle connection on $M^d$ such that the Yang--Mills equation, 
$
j[A,g_+]=0
$,
holds on $M_+$.
Assuming that 
  $|F|_{g_+}$ is nowhere zero on $M_+$ and  $|\bar F|^2_{\bar g}$ is nowhere zero on $\partial M$, then  the potential
$$
V:=\frac{(\Delta_{g_+}|F|_{g_+})}{|F|_{g_+}}+2(d-3)
$$
extends to a smooth function on $M$
such that
$$
V|_{\partial M} =0\, .
$$
\end{lemma}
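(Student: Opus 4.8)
The plan is to analyze the boundary asymptotics of $V$ by passing to an arbitrary compactified metric $g=\sigma^2 g_+$ and recording how each ingredient of $V$ scales in the defining function $\sigma$. Since $F_{ab}$ is a conformally invariant $\End{\mathcal V}M$-valued two-form, raising its indices with $g_+^{ab}=\sigma^2 g^{ab}$ produces the conformal weight
$$|F|_{g_+}^2=\sigma^4\,|F|_g^2\, ,\qquad\text{so that}\qquad |F|_{g_+}=\sigma^2\,\phi\, ,\quad \phi:=|F|_g\, .$$
The function $\phi$ is assembled from the tensors $F$ and $g$, which are smooth on all of $M$, so $V$ will extend smoothly across $\partial M$ as soon as $\phi$ is shown to be smooth and nowhere zero there; the remaining task is then only to extract the boundary value of $\Delta_{g_+}|F|_{g_+}/|F|_{g_+}$.

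First I would establish that $\phi$ is nonvanishing in a neighbourhood of $\partial M$. Working in a $g$-orthonormal frame adapted to the boundary, with $e_d$ the inward unit normal and $e_1,\dots,e_{d-1}$ tangent to $\partial M$, the antisymmetry of $F$ (so that its normal-normal block vanishes) gives the orthogonal decomposition
$$|F|_g^2\big|_{\partial M}=|\bar F|_{\bar g}^2+2\sum_{\alpha<d}\big(F_{\alpha d},F_{\alpha d}\big)\, ,$$
where $(X,Y):=-\operatorname{Tr}(XY)$ is the positive-definite endomorphism pairing and $|\bar F|_{\bar g}^2$ is the purely tangential part. Both terms on the right are nonnegative, so $|F|_g^2|_{\partial M}\ge|\bar F|_{\bar g}^2>0$ by hypothesis; combined with $|F|_{g_+}=\sigma^2\phi\neq0$ on $M_+$ this forces $\phi>0$ on all of $M$. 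Hence $\phi$ is smooth and nowhere zero up to the boundary, and $V$ extends smoothly.

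For the boundary value I would invoke the conformal transformation of the scalar Laplacian, which for $g_+=\sigma^{-2}g$ reads $\Delta_{g_+}u=\sigma^2\Delta_g u-(d-2)\,\sigma\,g^{ab}(\nabla_a\sigma)(\nabla_b u)$. Applying this to $u=|F|_{g_+}=\sigma^2\phi$ and retaining only the leading behaviour as $\sigma\to0$ (using $\nabla(\sigma^2\phi)=2\sigma\phi\,\nabla\sigma+{\mathcal O}(\sigma^2)$ and $\Delta_g(\sigma^2\phi)=2\phi|\nabla\sigma|_g^2+{\mathcal O}(\sigma)$) gives
$$\Delta_{g_+}|F|_{g_+}=2(3-d)\,|\nabla\sigma|_g^2\,\sigma^2\phi+{\mathcal O}(\sigma^3)\, ,$$
so that dividing by $|F|_{g_+}=\sigma^2\phi$ yields $\Delta_{g_+}|F|_{g_+}/|F|_{g_+}=2(3-d)|\nabla\sigma|_g^2+{\mathcal O}(\sigma)$. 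The last input is the Einstein normalization: transforming the scalar curvature of $g_+=\sigma^{-2}g$ and imposing $R^{g_+}=-d(d-1)$, equivalent to \eqref{P+}, forces $|\nabla\sigma|_g^2|_{\partial M}=1$. Evaluating at $\sigma=0$ then gives $\Delta_{g_+}|F|_{g_+}/|F|_{g_+}\big|_{\partial M}=2(3-d)=-2(d-3)$, whence $V|_{\partial M}=-2(d-3)+2(d-3)=0$.

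I expect the smoothness/nonvanishing step to be the main obstacle, since the hypothesis is stated only for the tangential curvature norm $|\bar F|_{\bar g}^2$, and one must be certain this controls the full norm $|F|_g^2$ at the boundary; the orthogonal decomposition above supplies exactly this, because the antisymmetric $F$ has no normal-normal component and the surviving mixed terms are nonnegative. Everything else is a careful but routine expansion in $\sigma$, resting on the two structural facts that $|F|_{g_+}=\sigma^2|F|_g$ and that the Einstein condition forces $|\nabla\sigma|_g^2|_{\partial M}=1$.
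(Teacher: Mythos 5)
Your proof is correct, and its main computation is essentially the paper's: both write $|F|_{g_+}=\sigma^2\Phi$ with $\Phi=|F|_g$, convert $\Delta_{g_+}$ acting on $\sigma^2\Phi$ via the conformal transformation of the scalar Laplacian, extract the leading coefficient $-2(d-3)|\ext\sigma|_g^2$, and use the Poincar\'e--Einstein normalization (the paper invokes the exact identity $|\ext\sigma|_g^2=1-2\rho\sigma$ of Equation~\nn{wehavegoals}, you derive $|\ext\sigma|_g^2|_{\partial M}=1$ from the scalar curvature) to conclude $V|_{\partial M}=0$; note also that your $\mathcal O(\sigma^k)$ bookkeeping matches the paper's convention that such remainders are smooth multiples of $\sigma^k$, which is what makes the smooth extension claim legitimate rather than a mere pointwise estimate.

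The one genuinely different sub-step is how you guarantee $\Phi$ is nonvanishing (hence smooth, so division is allowed) at the boundary. The paper uses the Yang--Mills equation: rewritten as in Equation~\nn{ccYM}, it forces the mixed components $n^bF_{ba}$ to vanish on $\partial M$, giving the exact identity $F_{ab}|_{\partial M}=\bar F_{ab}$ and hence $\Phi|_{\partial M}=|\bar F|_{\bar g}>0$. You instead use the purely algebraic orthogonal decomposition $|F|_g^2|_{\partial M}=|\bar F|^2_{\bar g}+2\sum_{\alpha<d}(F_{\alpha d},F_{\alpha d})\ge|\bar F|^2_{\bar g}$, which yields only an inequality but needs no equation of motion at all --- indeed your argument never uses $j[A,g_+]=0$, so it proves the lemma for an arbitrary smooth connection, a mild strengthening. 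The paper's route buys the sharper boundary identification $\Phi|_{\partial M}=|\bar F|_{\bar g}$ (used later, e.g.\ in Theorem~\ref{mainth}, where $f_0|_{\partial M}=\bar\Phi$ is pinned to $|\bar F|_{\bar g}$); yours buys independence from the Yang--Mills hypothesis. One small caveat: the positive-definiteness of the pairing $-\operatorname{Tr}(XY)$, which your nonnegativity step relies on, holds for anti-self-adjoint curvatures (unitary connections) and is implicit throughout the paper wherever $|F|_{g_+}$ is treated as a genuine norm, so you are consistent with the paper's standing conventions, but it is worth flagging as an assumption.
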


\begin{proof}
Let $\sigma$ be a defining function for $\partial M$. We may write 
$$
|F|_{g_+}=\sigma^2 \Phi\, ,
$$
where $\Phi:= |F|_g\in C^\infty M$ (since the underlying connection $A$ is smooth on $M$) and $g=\sigma^2 g_+$ is a compactified metric.
A short computation relates the scalar Laplacians of~$g$ and $g_+$ acting on $|F|_{g_+}$ along $M_+$:
\begin{align*}
\Delta^{\sigma^{-2}g}(\sigma^2 \Phi)&=
\sigma^2 (\Delta^g -(d-2) \sigma^{-1} \nabla_n) (\sigma^2 \Phi)\\
&=\sigma^2 \Big(\sigma^2\Delta^g -(d-6)\sigma \nabla_n -2 (d-3) |n|_g^2 +2\sigma\nabla.n
\Big)\Phi\, ,
\end{align*}
where $n:=\ext \sigma$. But, on a Poincar\'e--Einstein manifold, any
defining function-compactified metric pair $(g,\sigma)$ obeys~\cite{Goal}
\begin{equation}\label{wehavegoals}
|n|_g^2=1 -2 \rho \sigma\, ,
\end{equation}
where $\rho  = -\frac1d(\nabla.n + \sigma J^g)$.
Thus
$$
\frac{\Delta_{g_+}|F|_{g_+}}{|F|_{g_+}}=-2(d-3)+ \frac{\sigma}{\Phi} \Big(\sigma\Big[\Delta^g-
\frac{4(d-3)}{d}J^g\Big] -(d-6)\Big[\nabla_n +\frac{2}d \nabla.n\Big]
\Big)\Phi\, .
$$
Hence, because the Yang--Mills equation implies $F_{ab}|_{\partial M}=\bar F_{ab}$, the right hand side  extends to a smooth function on $M$, equaling $-2(d-3)$  on~$\partial M$.
\end{proof}

Hence Equation~\nn{FROGGO} now becomes (setting $d=6$)
\begin{equation}\label{IHSE}
(\Delta^{g_+}+6 -V)f^{\prime}=-|F|_{g_+}\, ,
\end{equation}
where $V$ is a short range potential (meaning $V\in C^\infty M$ and $V|_{\partial M}=0$, so that $V$ is bounded by compactness of $M$). Equation~\nn{IHSE} is an inhomogeneous Schr\"odinger equation on a six dimensional Poincar\'e--Einstein manifold.

\section{Scattering Theory}\label{scatter}

The central problem is now to study solutions $f'$  to the inhomogeneous
Schr\"odinger Equation~\nn{IHSE}. For that we need to impose
appropriate boundary conditions.
In particular we would like to achieve that
$$
\tilde g = \tilde \sigma^2 g_+\, ,\:\mbox{ where }
\tilde \sigma = e^f\ \mbox{ and }  f'=|F|_{g_+}f\, ,
$$ is a continuous compactified metric, meaning that
${\partial M}=\tilde\sigma^{-1}(0)$, where $\tilde \sigma\in C^1M$ and is
smooth on the interior $M_+$, and $\ext \tilde \sigma$ is nowhere
vanishing along~${\partial M}$.  In this case~$\tilde g$ is~$C^1$ and its
Levi-Civita connection is continuous. 
Observe that setting
$$
f=(1+K\sigma)\log\sigma + G \sigma\, ,
$$
where $G,K\in C^\infty M$, yields
$$
\tilde g = 
e^{ 2\sigma (G + K \log \sigma)} \sigma^2g_+
=e^{ 2\sigma (G + K \log \sigma)}g \, ,
$$ 
with the required regularity when $\sigma$ is a smooth defining function. This determines the desired boundary regularity and
our problem is now to solve the system
\begin{equation}\label{system}
\left\{
\begin{array}{l}
(\Delta^{g_+}+6 -V)f^{\prime}=-|F|_{g_+}\, ,\\[1mm]
f^\prime=|F|_{g_+}\big((1+K\sigma)\log\sigma + G \sigma\big)\, ,
\end{array}
\right.
\end{equation}
where $G,K\in C^\infty M$. The above boundary behavior will also follow from 
a scattering problem studied below. One may view $K$
as capturing a particular solution to the inhomogeneous equation while
$G$ captures non-local Neumann data for the homogeneous solution.

In fact, the system~\nn{system} has a unique smooth solution for~$f^\prime$.
 This statement follows almost directly from work of Mazzeo and Melrose~\cite{MM}, its generalization to include short range potentials by Joshi and S\'a Barreto~\cite{BaJo}, and the scattering theory of Graham and Zworski~\cite{GrZw}. The reader wishing to treat this as a ``black box'', may skip to Section~\ref{renorm} where we complete the proof of Theorem~\ref{main}.  The remainder of this section sketches the main details required to prove the following.
\begin{theorem}\label{mainth}
Let $(M_+,g_+)$ be a Poincar\'e--Einstein six-manifold and
$A$ a smooth connection on  a vector bundle on $M$, such that
 $|F|^2_{g_+}$ is nowhere vanishing on $M_+$,
 and  $|\bar F|^2_{\bar g}$ is nowhere zero on $\partial M$.
  Given $\sigma$ any smooth defining function for~$\partial M$,  there is a unique solution $f^\prime$ to 
$$
(\Delta^{g_+}+6 - V)f^\prime = -|F|_{g_+}\, ,
$$  
of the form
\begin{equation}\label{asyms}
f^\prime=\sigma^2\big(f_0 \log \sigma + f_1\sigma\big)\, ,
\end{equation}
for $f_0, f_1\in C^\infty M$ and $f_0|_{\partial M} = \bar \Phi$ where $\bar \Phi=|F|_g\big|_{\partial M}$.
\end{theorem}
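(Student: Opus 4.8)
The plan is to separate a local, formal analysis near $\partial M$ from a single global analytic input. The formal analysis pins down the admissible shape of $f'$ and the boundary value $f_0|_{\partial M}=\bar\Phi$, while the global input --- the meromorphic resolvent theory of Mazzeo--Melrose~\cite{MM}, its extension to short range potentials by Joshi--S\'a Barreto~\cite{BaJo}, and the scattering description of Graham--Zworski~\cite{GrZw} --- promotes the formal solution to a genuine one and supplies uniqueness.

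First I would compute the indicial data of the operator. For $d=6$ one has $\Delta^{g_+}\sigma^\lambda=\lambda(\lambda-5)\sigma^\lambda+\mathcal{O}(\sigma^{\lambda+1})$, so $\Delta^{g_+}+6$ has indicial polynomial $I(\lambda)=(\lambda-2)(\lambda-3)$; since Lemma~\ref{thepot} gives $V|_{\partial M}=0$, the potential leaves these indicial roots unchanged. A power $\sigma^\mu$ is square integrable against $\ext{\rm Vol}(g_+)\sim\sigma^{-6}\,\ext\sigma\,\ext{\rm Vol}(\bar g)$ iff $\mu>\tfrac52$, so the two roots straddle this $L^2$ threshold: $\lambda=2$ is the non-$L^2$ (Dirichlet) root and $\lambda=3$ the $L^2$ (decaying) root, and they differ by the integer $1$. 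Writing $|F|_{g_+}=\sigma^2\Phi$ with $\Phi\in C^\infty M$ and $\Phi|_{\partial M}=\bar\Phi$, the source of~\nn{IHSE} sits exactly at the lower root $\lambda=2$, which is what forces a logarithm into the leading term.

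Next I would build a formal polyhomogeneous solution. Differentiating $(\Delta^{g_+}+6)\sigma^\lambda=I(\lambda)\sigma^\lambda+\ldots$ in $\lambda$ and evaluating at $\lambda=2$ (where $I(2)=0$ and $I'(2)=-1$) gives $(\Delta^{g_+}+6)\big(a\,\sigma^2\log\sigma\big)=-a\,\sigma^2+\mathcal{O}(\sigma^3\log\sigma)$; matching the leading $-\sigma^2\Phi$ of the source therefore requires the ansatz $f'=\sigma^2 f_0\log\sigma+\sigma^3 f_1$ with $f_0|_{\partial M}=\bar\Phi$ and with \emph{no} log-free $\sigma^2$ term. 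The vanishing of this log-free $\sigma^2$ coefficient --- the Dirichlet datum at the non-$L^2$ root --- is exactly the boundary condition distinguishing the solution, and is the one already encoded in~\nn{system}. Substituting the ansatz into $(\Delta^{g_+}+6-V)f'=-|F|_{g_+}$ and matching the coefficients of $\sigma^{2+k}\log\sigma$ and $\sigma^{3+k}$ yields transport equations that determine the Taylor coefficients of $f_0$ and $f_1$ recursively from $\bar\Phi$, $V$, and the subleading part of $\Phi$. This produces a formal $\hat f$ with $(\Delta^{g_+}+6-V)\hat f+|F|_{g_+}=\mathcal{O}(\sigma^\infty)$.

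Finally I would upgrade $\hat f$ to a true solution and prove uniqueness. In the scattering normalization $s=3$, $n=d-1=5$, so $s(n-s)=6$ and~\nn{IHSE} reads $(\Delta^{g_+}+s(n-s)-V)f'=-|F|_{g_+}$. Since the hypotheses $|F|_{g_+}\neq0$ on $M_+$ and $|\bar F|^2_{\bar g}\neq0$ on $\partial M$ make $V$ a genuine short range potential (Lemma~\ref{thepot}), the Mazzeo--Melrose resolvent of $\Delta^{g_+}+s(n-s)$, extended across $V$ by Joshi--S\'a Barreto, is a meromorphic family acting within the $0$-calculus; applied to the $\mathcal{O}(\sigma^\infty)$ error it returns a correction $r$ whose expansion begins at the $L^2$ root $\sigma^3$, so that $f'=\hat f+r$ has the form~\nn{asyms}. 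For uniqueness, the difference $h$ of two such solutions solves the homogeneous equation and, having matching $f_0|_{\partial M}=\bar\Phi$ and no log-free $\sigma^2$ term, satisfies $h=\mathcal{O}(\sigma^3\log\sigma)$; hence $h\in L^2(\ext{\rm Vol}(g_+))$ is a nullvector of $\Delta^{g_+}+6-V$. The main obstacle is precisely to conclude that this forces $h=0$, i.e.\ that $s=3$ is \emph{not} a pole of the potential-perturbed resolvent --- equivalently, that $0$ is not an $L^2$-eigenvalue of $\Delta^{g_+}+6-V$. This cannot be settled by a naive energy estimate: on a general Poincar\'e--Einstein manifold $-\Delta^{g_+}$ may carry discrete spectrum below the essential-spectrum threshold $(d-1)^2/4=\tfrac{25}4$, and the indefinite, not-necessarily-small term $-\langle Vh,h\rangle$ offers no control, so the non-pole statement must be extracted from the Graham--Zworski~\cite{GrZw} and Joshi--S\'a Barreto~\cite{BaJo} analysis of the resolvent and Poisson operator at the exceptional point $2s-n=1$.
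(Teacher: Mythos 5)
Your indicial analysis, the placement of the log at the lower root $\lambda=2$, and the reduction of uniqueness to the absence of an $L^2$ nullvector of $\Delta^{g_+}+6-V$ are all sound, and your formal-series-plus-resolvent construction is an acceptable substitute for the paper's existence route (the paper instead sets $f'=-\frac{\ext}{\ext s}{\mathscr P}(s)\bar\Phi\big|_{s=d-3}$ and differentiates the scattering equation in $s$, which produces the inhomogeneous term automatically). But your final step is a genuine gap, and you say so yourself: you leave unproven the assertion that $s=3$ is not a pole of the potential-perturbed resolvent, i.e.\ that $0$ is not an $L^2$-eigenvalue of $\Delta^{g_+}+6-V$. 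Your claim that this ``cannot be settled by a naive energy estimate'' and ``must be extracted from'' \cite{GrZw,BaJo} is wrong on both counts. For $\operatorname{Re}s>\frac{d-1}2$ resolvent poles correspond exactly to $L^2$-eigenvalues, and for a \emph{general} short-range potential an eigenvalue at energy $6$ (below the continuum threshold $\frac{25}{4}$, as you note) can perfectly well occur; the meromorphy and mapping properties established in \cite{MM,GrZw,BaJo} cannot by themselves exclude it. The no-pole statement is problem-specific, and it is precisely where the hypotheses $|F|^2_{g_+}\neq 0$ on $M_+$ and $|\bar F|^2_{\bar g}\neq 0$ on $\partial M$ must enter --- your proposal never uses the second hypothesis at this point at all.

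The paper closes this step by exactly an energy estimate, performed in the right variables. By Equation~\nn{thecat} and Lemma~\ref{thepot}, in $d=6$ one has $\Delta^{g_+}+6-V=\Delta^{g_+,A}$, and the factorized form~\nn{selfadjoint}, $\Delta^{g_+,A}=\frac{1}{\,|F|_{g_+}}\nabla_a\circ g_+^{ab}|F|^2_{g_+}\nabla_b\circ\frac{1}{\,|F|_{g_+}}$, is manifestly non-positive: pairing a putative bound state $u$ with $u^*$ and integrating by parts yields Equation~\nn{bound}, whose right-hand side is $-\int|F|_{g_+}\nabla(u/|F|_{g_+})|^2_{g_+}$ normalized by $\int|u|^2$. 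Were $h\in L^2(M_+,g_+)$ a nullvector of $\Delta^{g_+,A}$, this would force $h/|F|_{g_+}$ to be constant, hence $h=c\,|F|_{g_+}$; but $|F|_{g_+}=\sigma^2\Phi$ with $\Phi|_{\partial M}=\bar\Phi\neq 0$ fails to be square integrable against $\ext{\rm Vol}(g_+)$, so $c=0$ and $h=0$. Thus bound states occur only for $s$ strictly inside $(2,d-3)$, the resolvent and Poisson operator are holomorphic at $s=d-3$, and both your existence step (applying the resolvent to the $\mathcal{O}(\sigma^\infty)$ error) and your uniqueness step then go through. Without this conjugation trick --- which converts the Schr\"odinger operator at the relevant spectral point into a divergence-form operator whose only formal zero mode is the non-$L^2$ function $|F|_{g_+}$ --- your argument does not close.
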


\begin{proof}
Let us first work in  general dimensions $d\geq 6$, and  consider the ``scattering equation''
\begin{equation}\label{scatteringequation}
\big(\Delta^{g_+,A}+(s-2)(d-3-s)\big)u(s) =0 \, ,
\end{equation}
where the spectral parameter $s\in {\mathbb C}$. Observe, by virtue of Equation~\nn{selfadjoint}, 
that  
$
u=|F|_{g_+}$
solves the above partial differential equation when $s=2,d-3$.
Moreover, our assumption on~$|\bar F|^2_{\bar g}$ implies that
$|F|_{g_+}\notin L^2(M_+,g_+)$.

Next we examine bound states, namely solutions $u\in L^2(M_+,g_+)$ with respect to the volume element $\ext {\rm Vol}(g_+)$. Suppose that $u$ is such a (non-trivial) bound state solution. Then   by multiplying the scattering Equation~\nn{scatteringequation}  with $u^*$, integrating over $M_+$, performing an  integration by parts (using that $\Delta^{g_+}$ was defined in the first place by considering a suitable dense set of Schwartz functions, say) employing Equation~\nn{selfadjoint}, we may reexpress the bound state eigenvalues of $\Delta^{g_+,A}$ as 
\begin{equation}\label{bound}
(s-2)(s-d+3)=\Big(s-\frac{d-1}{2}\Big)^2 - \frac{(d-5)^2}4=-
\frac{\int_{M_+} \ext {\rm Vol}_{g_+}
\Big||F|_{g_+} \nabla \big(\frac{u}{|F|_{g_+}}\big)\Big|_{g_+}^2
}{\int_{M_+} \ext {\rm Vol}{g_+} |u|_{g_+}^2}\, .
\end{equation}
The right hand side is real and strictly negative (recall that $|F|_{g_+}\notin L^2(M_+,g_+)
$) so we learn that 
bound states (with real values of $s$) occur when 
$$
s\in (2,d-3)\, .
$$

\smallskip

As discussed earlier (see Equation~\nn{thecat} and Lemma~\ref{thepot}), the operator $\Delta^{g_+,A}-2(d-3)$ equals the Schr\"odinger operator $\Delta^{g_+} -V$, so that the scattering operator appearing in Equation~\nn{scatteringequation} becomes
\begin{equation*}
\Delta^{g_+} -V + s(d-1-s)\, ,
\end{equation*} whose resolvent---in the zero $V$ case---was constructed by Mazzeo and Melrose in~\cite{MM}. That construction allowed Graham and Zworski to make the corresponding meromorphic Poisson operator ${\mathscr P}(s)$~\cite{GrZw}. Both the resolvent and Poisson operator in the case where the potential $V$ is non-zero  and short range   have been constructed by Joshi and S\'a Barreto~\cite{BaJo}. 
By the spectral theorem, the resolvent is holomorphic for $\operatorname{Re}s>d-3$. 
Mazzeo and Melrose have established that the operator $\Delta^{g_+}$ has only finitely many bound states~\cite{Mazzeo,MM}.
We must now consider the case where this operator is modified by a short range potential
(perturbation theory for Schr\"odinger operators is treated in~\cite{Reed,Kato}).
The finitely many bound states property is preserved upon  addition of the short range potential~$V$; this can be  proved by  
adjusting the parametrix construction leading to the analytic Fredholm-type theorem
of~\cite{MM} to include $V$~\cite{MMC}. See also~\cite{BaJo} where much of this construction was performed.
 Equation~\nn{bound} ensures that the lowest eigenvalue of $-\Delta^{g_+,A}$ is strictly greater than zero.
Hence the perturbed resolvent and associated Poisson operator are holomorphic at $s=d-3$.
Thus there exists an operator ${\mathscr P}(s):C^\infty{\partial M} \to C^\infty M_+$ such that given $\bar u \in C^\infty{\partial M} $,
$$
u(s)={\mathscr P}(s)\bar u
$$
that is holomorphic at $s=d-3$
 such that  
$$
\big(\Delta^{g_+} -V + s(d-1-s)\big) u(s) = 0\, .
$$
Moreover, when $s\notin\{\frac{d-1}{2}, \frac{d-1}2+1, \frac{d-1}2 +2 ,\cdots\}$, 
\begin{equation}\label{us}
u(s) = \sigma^{d-1-s} u_0(s)+ \sigma^s u_1(s) \, ,
\end{equation}
where $u_0,u_1\in C^\infty M$. [Note that the analysis leading to the above behavior is by now quite standard. Either one can employ a normal form for the Poincar\'e--Einstein metric~\cite{GLee}, or use that the scattering operator $\Delta^{g_+} - V +s(d-1-s)$ can be expressed as $\sigma$ times a Laplace--Robin-type operator subject to an $\mathfrak{sl}(2)$ algebra of the kind discussed in~\cite{GW}.]
In particular, $u=|F|_{g_+}$ is the unique solution when $s=d-3$ and such that~$u_0|_{\partial M}$ equals the restriction $\bar \Phi$ of the  smooth extension $\Phi \in C^\infty M$ of $|F|_{\sigma^2 g_+}$ to~${\partial M}$.

Since $u(s) = {\mathscr P}(s)\bar\Phi$ is holomorphic around $s=d-3$, 
 and $u(s)$ obeys the scattering Equation~\nn{scatteringequation},
we now consider $$f'=-\Big(\frac{\ext }{\ext s} {\mathscr P}(s)\bar\Phi\Big) \Big|_{s=d-3}\, .$$ 
Differentiating~\nn{scatteringequation} with respect to $s$ and setting $s=d-3$ then gives
$$
\Delta^{g_+,A} f^\prime +(d-5) u(d-3)=0\, .
$$
As discussed above, $u(d-3)$ must equal $|F|_{g_+}$. Hence it only remains to establish Equation~\nn{asyms} and uniqueness of the solution. The former follows
 from  differentiating Equation~\nn{us} with respect to $s$, setting~$s=d-3$ and remembering that $u_0|_{\partial M}=\bar \Phi$ which is $s$-independent.
 The latter relies on Lemma~\ref{asym}, given  below, which shows that the leading asymptotics of $f^\prime$ are determined to sufficient order in order  that the difference of any pair of solutions is in $L^2(M_+,g_+)$. Since such a difference must obey  
 the $s=d-3$ case of the scattering Equation~\nn{scatteringequation} and bound states are  in the interval $s\in (2,d-3)$, 
uniqueness follows. The proof is completed by setting $d=6$.
  \end{proof}

%
%
%
%
%

%
%
%
%
%
%
%
%
%
%
%

\section{Renormalization}\label{renorm}

We are now ready to tackle the proof of Theorem~\ref{main}. 
We first establish that the Neumann data for the inhomogeneous Laplace-type Equation~\nn{IHSE} encodes the renormalized energy.  
For that we need to analyze the asymptotics of the solution $f^\prime$ to the problem~\nn{system}.
\begin{lemma}\label{asym}
Let $(M_+,g_+)$ be a Poincar\'e--Einstein $6$-manifold, $A$ a smooth vector bundle connection on  a vector bundle 
such that  
  $|F|_{g_+}$ is nowhere zero on $M_+$ and  $|\bar F|^2_{\bar g}$ is nowhere zero on $\partial M$.  Let~$\sigma$ a smooth defining function for~${\partial M}$. 
Then
$$
f^{\rm asym}=|F|_{g_+}\big((1+K\sigma)\log\sigma + G \sigma\big)\, ,
$$
where $G\in C^\infty M$ and 
$$K|_{\partial M}=
-\frac{2  (\nabla_{\hat n} + 2H^g) |F|_g}{|F|_g}\, ,
 $$
obeys
$$
\Delta^{g_+,A}f^{\rm asym}=-|F|_{g_+}+ {\mathcal O}(\sigma^4 \log \sigma,\sigma^4)\, .
$$
\end{lemma}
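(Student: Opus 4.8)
The plan is to build $f^{\rm asym}$ as a Fefferman--Graham-style formal solution, matching it to the indicial data of the operator $\Delta^{g_+,A}=\Delta^{g_+}+6-V$ (here $V=\mathcal O(\sigma)$ by Lemma~\ref{thepot}). The key simplification is the factored form~\nn{selfadjoint}: writing $f^{\rm asym}=|F|_{g_+}\psi$ with $\psi=(1+K\sigma)\log\sigma+G\sigma$, we have
$$
\Delta^{g_+,A}f^{\rm asym}=\frac{1}{|F|_{g_+}}\nabla_a^{g_+}\big(|F|_{g_+}^2\,\nabla^a_{g_+}\psi\big)\, ,
$$
so the whole analysis reduces to one weighted divergence. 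I would convert every $g_+$-quantity to the compactified metric $g=\sigma^2g_+$, using $|F|_{g_+}=\sigma^2\Phi$ with $\Phi=|F|_g\in C^\infty M$, the conformal divergence rule $\nabla_a^{g_+}X^a=\nabla_a^gX^a-6\sigma^{-1}n_aX^a$ (with $n=\ext\sigma$), and the Poincar\'e--Einstein identity $|n|_g^2=1-2\rho\sigma$ of~\nn{wehavegoals}. Crucially, $\log\sigma$ enters $\psi$ only through $\nabla\log\sigma=\sigma^{-1}n$, so the logarithm does not propagate at leading order.

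The core step is the leading term. A direct evaluation yields the exact identity
$$
\Delta^{g_+,A}\big(\sigma^2\Phi\log\sigma\big)=-\sigma^2\Phi\,|n|_g^2+\sigma^3\,\frac{\nabla_a^g(\Phi^2n^a)}{\Phi}\, ,
$$
whose leading part is exactly $-\sigma^2\Phi=-|F|_{g_+}$; this reflects the resonance of the forcing with the lower indicial root $s=2$ of $\Delta^{g_+}+6$, whose indicial polynomial is $(s-2)(s-3)$. The two remaining pieces of the ansatz are governed by the same indicial data. The Neumann term $G\sigma$ sits at the second root $s=3$; since $(s-2)(s-3)$ vanishes there, a short computation shows it contributes only at $\mathcal O(\sigma^4)$, so $G$ is genuinely free. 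The term $K\sigma\log\sigma$, being $\partial_s$ of a power at the root $s=3$, contributes $\sigma^3\Phi K\,|n|_g^2+\mathcal O(\sigma^4\log\sigma,\sigma^4)$, with the would-be $\sigma^3\log\sigma$ killed precisely because $(s-2)(s-3)$ vanishes at $s=3$.

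Collecting the $\sigma^3$ coefficient of $\Delta^{g_+,A}f^{\rm asym}$ and requiring its boundary value to vanish then fixes
$$
K|_{\partial M}=\Big(-2\rho-\frac{\nabla_a^g(\Phi^2n^a)}{\Phi^2}\Big)\Big|_{\partial M}=\Big(-2\rho-\frac{2\nabla_n\Phi}{\Phi}-\nabla.n\Big)\Big|_{\partial M}\, .
$$
To match the stated formula I would use the boundary relations $\rho|_{\partial M}=-H^g$ and $\nabla.n|_{\partial M}=6H^g$. These follow by splitting $\nabla.n=\bar g^{ab}\nabla_an_b+n^an^b\nabla_an_b$ on $\partial M$, where the tangential trace is $(d-1)H^g=5H^g$ and, since $|n|_g=1$ and $\nabla_nn=-\rho\,n$ there, the normal part equals $-\rho$; combined with $\rho|_{\partial M}=-\tfrac16\nabla.n$ this gives $\nabla.n=6H^g$ and $\rho=-H^g$. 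Substituting and using $\hat n=n$, $\nabla_{\hat n}\Phi=\nabla_n\Phi$ on $\partial M$ reproduces $K|_{\partial M}=-2(\nabla_{\hat n}+2H^g)|F|_g/|F|_g$ exactly. Since only $K|_{\partial M}$ was used, the residual $\sigma^3$ term is a multiple of a function vanishing on $\partial M$, hence $\mathcal O(\sigma^4)$, and the total error is $\mathcal O(\sigma^4\log\sigma,\sigma^4)$.

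The main obstacle is organizational rather than conceptual: powers and single logarithms appear together, and one must verify that no $\sigma^3\log\sigma$ term survives---such a term could not be absorbed by the ansatz and would obstruct the construction. This vanishing is exactly what the resonance structure (the coincidence of $s=3$ with an indicial root) guarantees. The most delicate constant-chasing is translating the Poincar\'e--Einstein data $|n|_g^2=1-2\rho\sigma$ and $\rho=-\tfrac16(\nabla.n+\sigma J^g)$ into the mean-curvature normalization, which is what pins down the precise coefficients in $K|_{\partial M}$.
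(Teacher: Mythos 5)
Your proposal is correct and takes essentially the same route as the paper's proof: both verify the ansatz by expanding $\Delta^{g_+,A}f^{\rm asym}$ with respect to the compactified metric $g=\sigma^2 g_+$ via the factored form of Equation~\nn{selfadjoint}, use the Poincar\'e--Einstein identity~\nn{wehavegoals} to extract the subleading coefficient, and fix $K|_{\partial M}$ by demanding it vanish on~${\partial M}$, invoking $\rho|_{\partial M}=-H^g$. Your exact divergence identity for the $\sigma^2\Phi\log\sigma$ term, the indicial-root framing, and your self-contained derivation of $\rho|_{\partial M}=-H^g$ and $\nabla\hh .\hh n|_{\partial M}=6H^g$ from umbilicity are correct mild repackagings of the paper's term-by-term evaluation of $\sigma^2\Delta$ on $\sigma$, $\log\sigma$ and $\sigma\log\sigma$ (the paper instead cites~\cite{Goal} for $\rho|_{\partial M}=-H^g$), and your boundary value of $K$ agrees with the paper's $K|_{\partial M}=(4\rho-2n\hh .\hh v)|_{\partial M}$.
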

\begin{proof}
Firstly one finds
\begin{align*}
\Delta^{g_+,A} f^{\rm asym}&
=|F|_{g_+}  \hh g^{ab}_+ \big(\nabla_a
+2v_a\big) \Big(\nabla_b \big((1+K\sigma)\log\sigma + G \sigma\big)\Big)\\
&
=|F|_{g_+}  \big( \sigma^2 \Delta  + 2 \sigma^2 \nabla_v\big)  \big((1+K\sigma)\log\sigma + G \sigma\big)\, ,
\end{align*}
where $v:=\ext \log |F|_g$, $n:=\ext  \sigma$ and, 
unless specified otherwise,  all index manipulations and Levi-Civita connections on the right hand side are with respect to the compactified metric $g=\sigma^2 g_+$.

Now observe that 
\begin{align*}
\sigma^2 \Delta \sigma\;\; &= \sigma^2 \nabla.n=-6\sigma^2\rho -\sigma^3 J \, ,
\\
\sigma^2 \Delta \log\sigma\:  &= 
-|n|^2 + \sigma \nabla.n
=-1 -4 \sigma\rho
-\sigma^2 J
\, ,\\ 
\sigma^2 \Delta (\sigma \log\sigma) &= 
\sigma |n|^2 
+ (\sigma^2  \log \sigma 
+\sigma^2) \nabla.n=
\sigma -2 \sigma^2 ( 3 \log \sigma +4) \big(\rho+{\mathcal O}(\sigma)\big)
\, ,
\end{align*}
where $-6\rho :=\nabla.n + \sigma J$. In the above, we have used Equation~\nn{wehavegoals}.
Hence 
\begin{align*}
\Delta^{g_+,A} f^{\rm asym}&=
|F|_{g_+}\Big(
-1 + (K-4\rho + 2n.v ) \sigma
+{\mathcal O}(\sigma^2 \log\sigma,\sigma^2) \Big)\, .
\end{align*}
Noting that $|F|_{g_+}={\mathcal O}(\sigma^2)$ and that the Poincar\'e--Einstein condition implies  $\rho|_{\partial M} = -H^g$ (see for example~\cite{Goal}), the result follows.
 \end{proof}

We now need a technical lemma.
\begin{lemma}\label{thebluelemma}
Let $(M_+,g_+)$ be a Poincar\'e--Einstein $6$-manifold, $A$ a smooth vector bundle connection 
that solves the Yang--Mills equation $
j[A,g_+]=0
$
on the interior~$M_+$, and $\sigma$ a smooth defining function for~${\partial M}$. Then
$$
\Big((\nabla_{\hat n}+4H^g)|F|^2_g\Big)\Big|_{\partial M}=0\, .
$$
In the above $\hat n$ is the unit normal to the hypersurface ${\partial M}$ with respect to $g=\sigma^2 g_+$ and~$H^g$ the corresponding mean curvature. It follows that
the function $K$  given in Lemma~\ref{asym}
obeys
$$
K|_{\partial M} =0\, . 
$$
\end{lemma}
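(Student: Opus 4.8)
The plan is to reduce the two displayed statements to a single normal-derivative identity, and then to prove that identity using the Yang--Mills equation, the second Bianchi identity, and the total umbilicity of $\partial M$. First I would note that the two assertions are equivalent: since $|F|^2_g=(|F|_g)^2$, the Leibniz rule gives
$$
(\nabla_{\hat n}+4H^g)|F|^2_g=2\hh|F|_g\hh(\nabla_{\hat n}+2H^g)|F|_g\, ,
$$
so comparing with the value of $K|_{\partial M}$ furnished by Lemma~\ref{asym} yields
$$
K|_{\partial M}=-\frac{1}{|F|^2_g}\hh(\nabla_{\hat n}+4H^g)|F|^2_g\Big|_{\partial M}\, .
$$
As $|F|^2_g|_{\partial M}=|\bar F|^2_{\bar g}$ is nowhere zero, it suffices to prove $(\nabla_{\hat n}+4H^g)|F|^2_g|_{\partial M}=0$.

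Next I would show that the normal contraction of $F$ vanishes on $\partial M$. Comparing the Yang--Mills currents of the two conformally related metrics $g_+$ and $g=\sigma^2 g_+$ and using the contorsion of $\nabla^g-\nabla^{g_+}$, a short computation gives in $d$ dimensions
$$
j_b[A,g]=\sigma^{-2}\big(j_b[A,g_+]+(d-4)\hh\Upsilon^c F_{cb}\big)\, ,\qquad \Upsilon=\ext\log\sigma\, .
$$
With $j[A,g_+]=0$ and $d=6$ this reduces to $j_b[A,g]=2\sigma^{-1}n^c F_{cb}$, where $n=\ext\sigma$ and indices are now raised with $g$. Because $A$ and $g$ are smooth up to $\partial M$, the current $j_b[A,g]$ is smooth there, so $n^c F_{cb}=\tfrac12\sigma\hh j_b[A,g]={\mathcal O}(\sigma)$; since $|n|_g|_{\partial M}=1$ by Equation~\eqref{wehavegoals}, the unit-normal contraction $E_b:=\hat n^c F_{cb}$ vanishes on $\partial M$, i.e.\ $F|_{\partial M}$ is tangential.

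Finally I would compute the normal derivative. The second Bianchi identity $\nabla_{[c}F_{ab]}=0$ lets me write
$$
\nabla_{\hat n}|F|^2_g=-2\operatorname{Tr}\big(F^{ab}\hh\hat n^c\nabla_c F_{ab}\big)=4\operatorname{Tr}\big(F^{ab}\hh\hat n^c\nabla_a F_{bc}\big)\, ,
$$
and then expand $\hat n^c\nabla_a F_{bc}=-\nabla_a E_b-(\nabla_a\hat n^c)F_{bc}$. For the first term, $E_b|_{\partial M}=0$ forces $\nabla_a E_b|_{\partial M}=\tfrac12\hat n_a\hh j_b[A,g]$, so its contraction with $F^{ab}$ equals $\tfrac12 E^b j_b=0$ on $\partial M$. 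For the second term, since $F$ is tangential on $\partial M$, only the tangential--tangential block of $\nabla_a\hat n^c$ survives the contraction, namely the second fundamental form $\II$; the almost-Einstein condition for the Poincar\'e--Einstein scale, that the trace-free part of $\nabla_a\nabla_b\sigma+\sigma P_{ab}$ vanishes (cf.~\eqref{wehavegoals} and~\cite{Goal}), gives $\nabla_a\nabla_b\sigma+\sigma P_{ab}=-\rho\hh g_{ab}$, so that $\nabla_a\nabla_b\sigma|_{\partial M}=-\rho\hh g_{ab}$ and hence $\II_{ab}=H^g\bar g_{ab}$ upon using $\rho|_{\partial M}=-H^g$. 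Thus $\nabla_a\hat n^c\to H^g\delta_a{}^c$ on tangential indices and the second term contributes $-4H^g|F|^2_g$. Adding the two contributions gives $\nabla_{\hat n}|F|^2_g|_{\partial M}=-4H^g|F|^2_g|_{\partial M}$, and the first paragraph then yields $K|_{\partial M}=0$.

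The step I expect to be the main obstacle is the reduction of $\operatorname{Tr}(F^{ab}(\nabla_a\hat n^c)F_{bc})$ to $H^g|F|^2_g$: this relies both on the vanishing of the electric part $E$ on $\partial M$ (so that $F$ is purely tangential and only the shape operator enters) and on the total umbilicity $\II_{ab}=H^g\bar g_{ab}$ forced by the Poincar\'e--Einstein structure. The remaining ingredients---the conformal transformation of the current and the Bianchi rearrangement---are routine.
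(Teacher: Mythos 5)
Your proposal is correct and follows essentially the same route as the paper's proof: the Bianchi rearrangement $\nabla_{\hat n}|F|^2_g=4\operatorname{Tr}(F^{ab}\hat n^c\nabla_a F_{bc})$, a product rule splitting the result into a term killed by the Yang--Mills-implied vanishing of the normal contraction $n^cF_{cb}=\tfrac{\sigma}{2}\hh j_b[A,g]$ (your conformal-current computation is just a re-derivation of the paper's Equation~\nn{ccYM}) and a term evaluated via the Poincar\'e--Einstein identity $\nabla_a n_b|_{\partial M}=H^g g_{ab}$. The only cosmetic difference is that you work with an extension of the unit normal $\hat n$ and the electric part $E_b$ (correctly noting that only the tangential--tangential block of $\nabla_a\hat n^c$ survives), whereas the paper computes directly with $n=\ext\sigma$ and thereby avoids extension issues.
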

\begin{proof}
Let $n:=\ext \sigma$ and consider
\begin{align*}
\frac12 \nabla_n |F|^2_g &= 
-\operatorname{Tr} F^{ab} \nabla_n F_{ab}=
 2\operatorname{Tr} F^{ab} n^c\nabla_a F_{bc}\\
 &=2\operatorname{Tr} \big(F^{ab} \nabla_a(n^c F_{bc})
 -F^{ab} (\nabla_a n^c) F_{bc}\big
 )\, .
\end{align*}
But along ${\partial M}$, on a Poincar\'e--Einstein manifold we have (see for example~\cite{Will1})
$$
\nabla_a n_b |_{\partial M}=
H^g\hh g_{ab}\, ,
$$
while the Yang--Mills condition implies $n^c F_{bc}=-\frac12 \sigma \nabla^c F_{cb}$ (see Equation~\nn{ccYM} below). 
Hence
\begin{align*}
\frac12 \nabla_{\hat n} |F|^2_g\big|_{\partial M}=
\operatorname{Tr} \big( -F^{ab} \hat n_a \nabla^c F_{cb}
-2H^g F^{ab} F_{ba}\big)\big|_{\partial M}=-2 H^g |F|_g^2\big|_{\partial M}\, .
\end{align*}
The first result now follows while the second needs only the product rule.
\end{proof}

%
%
 
 Using that $f^\prime$ solving the system~\nn{system}, for a given smooth defining function $\sigma$, is uniquely determined by Theorem~\ref{mainth}, it follows that the function 
 $$\overline G:=G|_{\partial M} \in C^\infty$$ is also determined.  In fact it encodes the renormalized energy.
 
 \begin{theorem}\label{BoundaryB}
 Let $(M^6_+,g_+)$ be a  Poincar\'e--Einstein manifold  and $A$ be a smooth vector bundle connection on $M$. Moreover suppose that on $M_+$  that the Yang--Mills equation $j[A,g_+]=0$ holds, $|F|^2_{g_+}$ is nowhere vanishing, and 
    $|\bar F|^2_{\bar g}$ is nowhere zero on $\partial M$.
 Then
$$
\frac14\color{black}
\int_{\partial M} 
 \ext {\rm Vol}(\bar g) 
|F|_{{g^{\rm c}}}^2\hh  \overline G = 
E_{\rm ren}\, .
$$
 \end{theorem}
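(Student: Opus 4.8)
The plan is to adapt the Fefferman--Graham argument for the renormalized volume to the inhomogeneous Schr\"odinger Equation~\nn{IHSE}, turning the bulk integral whose finite part defines $E_{\rm ren}$ into a boundary integral of Neumann data. I would fix once and for all the canonical compactification $g=g^{\rm c}=\sigma^2 g_+$ together with its geodesic defining function $\sigma$, which is precisely the one entering the cut-offs $M_\varepsilon$ in~\nn{eren}. For this choice $|\ext\sigma|_g=1$ near $\partial M$ (so $\rho=0$ there) and $\partial M$ is totally geodesic, i.e.\ $H^g=0$. Let $f'$ be the unique solution supplied by Theorem~\ref{mainth} and put $f=f'/|F|_{g_+}$; by the system~\nn{system} and Lemma~\ref{thebluelemma} this has the boundary form $f=(1+K\sigma)\log\sigma+G\sigma$ with $K|_{\partial M}=0$ and $\overline G=G|_{\partial M}$.

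The first step is to put the equation in divergence form. Expanding the definition~\nn{selfadjoint} of $\Delta^{g_+,A}$, the relation $\Delta^{g_+,A}f'=-|F|_{g_+}$ becomes $\nabla^{g_+}_a\big(|F|^2_{g_+}\nabla_{g_+}^a f\big)=-|F|^2_{g_+}$, so the integrand $|F|^2_{g_+}=Q[g_+,A]$ of Lemma~\ref{thenewone} is a total $g_+$-divergence. Integrating over $M_\varepsilon=\{\sigma>\varepsilon\}$ and applying the divergence theorem expresses $I(\varepsilon):=\int_{M_\varepsilon}\ext{\rm Vol}(g_+)\,|F|^2_{g_+}$ as the flux of $|F|^2_{g_+}\nabla_{g_+}f$ through $\{\sigma=\varepsilon\}$. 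Rewriting this flux in compactified quantities---using $|F|^2_{g_+}=\sigma^4|F|^2_g$, the factor $\sigma^{-5}$ relating the induced hypersurface measures of $g_+$ and $g$, and the factor $\sigma$ relating the respective unit normals---all explicit powers of $\sigma$ cancel and leave a clean integral of $|F|^2_g\,\nabla_n f$ over $\{\sigma=\varepsilon\}$, where $n=\ext\sigma$.

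I would then insert the boundary asymptotics of $f$. With $\rho=0$, the computation behind Lemma~\ref{asym} gives $\nabla_n f=\tfrac1\sigma+G+{\mathcal O}(\sigma\log\sigma)$; here the vanishing $K|_{\partial M}=0$ from Lemma~\ref{thebluelemma} is exactly what removes a would-be $\log\sigma$ term in the Neumann data, consistent with the absence of a $\log\varepsilon$ contribution noted after~\nn{Ereg}. Hence $I(\varepsilon)=\tfrac1\varepsilon A(\varepsilon)+\int_{\{\sigma=\varepsilon\}}|F|^2_g\,G\,\ext A_g+{\mathcal O}(\varepsilon\log\varepsilon)$, where $\ext A_g$ is the induced area element and $A(\varepsilon):=\int_{\{\sigma=\varepsilon\}}|F|^2_g\,\ext A_g$ is the $|F|^2_g$-weighted area of the level set. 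Applying the operation $\tfrac14\tfrac{\ext}{\ext\varepsilon}\big(\varepsilon\,\pdot\big)\big|_{\varepsilon=0}$, which by~\nn{eren} together with $\operatorname{Tr}(g_+^{ab}g_+^{cd}F_{ac}F_{bd})=-|F|^2_{g_+}$ returns $E_{\rm ren}$, extracts the finite part: the second term contributes $\tfrac14\int_{\partial M}\ext{\rm Vol}(\bar g)\,|F|^2_{g^{\rm c}}\,\overline G$, while the divergent first term contributes only $\tfrac14 A'(0)$.

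The decisive step is therefore to show $A'(0)=0$. By the first variation of the $|F|^2_g$-weighted area of the family $\{\sigma=\varepsilon\}$, $A'(0)$ equals the integral over $\partial M$ of $\nabla_{\hat n}|F|^2_g$ plus $|F|^2_g$ times the mean curvature (the trace of the second fundamental form) of $\partial M$ with respect to $g$. For the geodesic compactification $\partial M$ is totally geodesic, so this mean curvature vanishes; and Lemma~\ref{thebluelemma}, which asserts $\big(\nabla_{\hat n}+4H^g\big)|F|^2_g\big|_{\partial M}=0$, gives $\nabla_{\hat n}|F|^2_g|_{\partial M}=0$ since $H^g=0$. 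Thus $A'(0)=0$ and the claimed identity follows. I expect this last step to be the main obstacle: it is exactly where Lemma~\ref{thebluelemma} and the totally geodesic property of the canonical compactification must conspire to annihilate the variation of the divergent part, and it is also where one must confirm that the interplay of the $\sigma\log\sigma$ and $\sigma$ terms in $\nabla_n f$ leaves no residual $\log\varepsilon$ divergence.
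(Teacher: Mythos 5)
Your proposal is correct and follows essentially the same route as the paper: multiply the divergence-form Schr\"odinger equation by $|F|_{g_+}$, integrate over $M_\varepsilon$, apply the divergence theorem (with the compactified rewriting $\hat n^a_{g_+}|F|^2_{g_+}\nabla_a f\,\ext A_{g_+}=|F|^2_g\nabla_{\hat n_g}f\,\ext A_g$), insert the asymptotics $f=(1+K\sigma)\log\sigma+G\sigma$ with $K|_{\partial M}=0$, and apply the renormalizing operator. Your explicit isolation of the weighted-area derivative $A'(0)$ and its vanishing via $H^{g^{\rm c}}=0$ together with Lemma~\ref{thebluelemma} is a careful spelling-out of precisely the finite-part extraction the paper dismisses as ``easy to check,'' and it is correct.
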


\begin{proof}
We begin by integrating  $-|F|_{g_+}$ times the   inhomogeneous Schr\"odinger Equation~\nn{FROGGO} over the cutoff interior $M_{\varepsilon}$
$$
-\int_{M_{\varepsilon}}   \ext {\rm Vol}(g_+ )  |F|_{g_+} \Delta^{A,g_+} f^\prime = \int_{M_{\varepsilon}}  \ext {\rm Vol}(g_+ ) |F|_{g_+}^2\, .
$$
Applying the renormalizing operator 
$\operatorname{ren}:= \frac{\ext}{\ext\varepsilon}\circ \hh\varepsilon\: \:\pdot\:\: \Big|_{\varepsilon=0}$ to both sides and using the divergence theorem on the left hand side gives
$$
\operatorname{ren}\int_{{\partial M}_{\varepsilon}}
 \ext {\rm Vol}(\bar g_+ )
 \hat n^a_{g_+,\varepsilon} |F|_{g_+}^2 
\nabla_a \Big( \frac{f^\prime}{\:|F|_{g_+}}\Big)
  = 
   4\color{black}
E_{\rm ren}\, .
$$
Here we used that
$|F|_{g_+}\Delta^{g_+,A} :=\nabla_a^{g_+} \circ g_+^{ab} |F|_{g_+}^2 
\nabla_b^{g_+} \circ \frac{1}{\:|F|_{g_+}}$ from Equation~\nn{selfadjoint} and $\hat n^a_{g_+,\varepsilon}$ is the inward unit normal vector.
We now use that the solution $f'$ to the inhomogeneous 
Schr\"odinger equation can be written in the form
$$
f^\prime=|F|_{g_+}\big((1+K\sigma)\log\sigma + G \sigma\big)\, ,
$$
so that
\begin{align*}
E_{\rm ren}&=
\frac14\color{black}
\operatorname{ren}\int_{{\partial M}_{\varepsilon}}
 \ext {\rm Vol}(\bar g_\varepsilon )\hh 
 |F|_{{g^{\rm c}}}^2 \hh\hh
\nabla_{\hat n_\varepsilon} 
\big((1+K\sigma)\log\sigma + G \sigma\big)\, .
\end{align*}
Here we have written all quantities in terms of the canonical compactified metric ${g^{\rm c}}=\sigma^2_{\rm c} g_+$ determined by $\bar g$.
For this,  recall that $\sigma|_{{\partial M}_{\varepsilon}}=\varepsilon$ so $g_+$ pulled back to ${\partial M}_\varepsilon$ gives $\varepsilon^{-2} \bar g_\varepsilon$ where $\bar g_\varepsilon$ is the pullback of the compactified metric.

Next we use that  $K|_{\partial M} = 0$ (see Lemma~\ref{thebluelemma}) so that 
$K|_{{\partial M}_{\varepsilon}} = {\mathcal O}(\varepsilon)$.  
Also $
\ext \sigma |_{{\partial M}_{\varepsilon}}
$
is  a conormal to~${\partial M}_\varepsilon$ and along ${\partial M}$  has unit length. Hence 
$$\nabla_{\hat n_\varepsilon}\sigma |_{{\partial M}_\varepsilon}=1+\mathcal{O}(\varepsilon)\, .$$
It is now easy to check that
$$
E_{\rm ren}=
\frac14\color{black}
\int_{\partial M}  \ext {\rm Vol}(\bar g )\hh 
 |F|_{{g^{\rm c}}}^2 G\, .
$$
\end{proof}

We need one last technical lemma.

\begin{lemma}\label{bdy}
Let $(M^d_+,g_+)$ be a  conformally compact manifold with $d\neq 4$,  and $A$ be a smooth connection on a vector bundle ${\mathcal V}M$ that solves the Yang--Mills equation,
$$
j[A,g_+]=0\, ,
$$
on the interior $M_+$. Then for any 
smooth defining function $\sigma$ for $\partial M$ and corresponding
cut-off interior $M_{\varepsilon}$,
$$
\int_{M_\varepsilon} 
 \ext {\rm Vol}(g_+ ) g_+^{ab}\nabla_a^{g_+}
 \operatorname{Tr} ( F_{bc}  \mu^c)
\stackrel{\varepsilon\to 0}\longrightarrow 0\, ,
$$
when $\mu\in \Gamma(TM\otimes \End{\mathcal V}{M})$ obeys
$$
\sigma^{3-d} \mu|_{\partial M_{\varepsilon}} \stackrel{\varepsilon \to 0}{\longrightarrow}0\, .
$$
\end{lemma}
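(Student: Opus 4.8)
The plan is to recognize the integrand as a total $g_+$-divergence and to convert the bulk integral over $M_\varepsilon$ into a boundary integral over $\partial M_\varepsilon = \{\sigma = \varepsilon\}$ via the divergence theorem, then show that this boundary term vanishes as $\varepsilon \to 0$. Taking the trace first makes $\operatorname{Tr}(F_{bc}\mu^c)$ an ordinary (scalar-valued) one-form, so setting $W^a := g_+^{ab}\operatorname{Tr}(F_{bc}\mu^c)$ we have $\nabla^{g_+}_a W^a = g_+^{ab}\nabla^{g_+}_a\operatorname{Tr}(F_{bc}\mu^c)$, and Stokes' theorem gives
$$
\int_{M_\varepsilon}\ext{\rm Vol}(g_+)\,\nabla^{g_+}_a W^a
=\int_{\partial M_\varepsilon}\ext{\rm Vol}(\bar g_+)\,\hat n_a^{g_+} W^a
=\int_{\partial M_\varepsilon}\ext{\rm Vol}(\bar g_+)\,\hat n^b_{g_+}\operatorname{Tr}(F_{bc}\mu^c)\, ,
$$
with $\hat n_{g_+}$ the unit normal to $\partial M_\varepsilon$ for $g_+$. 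It therefore suffices to estimate the right-hand side as $\varepsilon\to 0$.

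Next I would track the conformal weights relating $g_+$ to the compactified metric $g=\sigma^2 g_+$. On the level set $\partial M_\varepsilon$ the metric induced by $g_+$ equals $\varepsilon^{-2}$ times that induced by $g$, so $\ext{\rm Vol}(\bar g_+)=\sigma^{1-d}\ext{\rm Vol}(\bar g_\varepsilon)$, where $\bar g_\varepsilon$ is the pullback of $g$; and since $|\ext\sigma|^2_{g_+}=\sigma^2|\ext\sigma|^2_g$ and $\nabla^b_{g_+}\sigma=\sigma^2\nabla^b_g\sigma$, the unit normals obey $\hat n^b_{g_+}=\sigma\,\hat n^b_g$. Collecting these factors and using $\sigma|_{\partial M_\varepsilon}=\varepsilon$, the boundary integral becomes $\varepsilon^{2-d}\int_{\partial M_\varepsilon}\ext{\rm Vol}(\bar g_\varepsilon)\,\hat n^b_g\operatorname{Tr}(F_{bc}\mu^c)$, where $\hat n_g$ is bounded (because $\ext\sigma$ is nonvanishing in a collar of $\partial M$), $F_{bc}$ is smooth on $M$, and the total volumes ${\rm Vol}(\partial M_\varepsilon,\bar g_\varepsilon)$ stay bounded.

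The crux is that the naive bound $\varepsilon^{2-d}\sup|\mu|$ produced this way is exactly one power of $\varepsilon$ too weak to be killed by the hypothesis $\sigma^{3-d}\mu\to 0$; the missing power must come from the Yang--Mills equation acting on the normal component of the curvature. Indeed, the conformal transformation of the two-form codifferential carries a factor $d-4$, so $\nabla^a_{g_+}F_{ac}=0$ rewritten in $g$ forces $\hat n^b_g F_{bc}={\mathcal O}(\sigma)$; this is precisely the relation $n^c F_{bc}=-\tfrac12\sigma\nabla^c F_{cb}$ of Equation~\nn{ccYM} used in Lemma~\ref{thebluelemma}, valid for $d\neq 4$. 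Feeding this extra $\sigma$ in, the boundary integrand is bounded pointwise by $C\,\varepsilon\,|\mu|$, so that
$$
\Big|\int_{\partial M_\varepsilon}\ext{\rm Vol}(\bar g_+)\,\hat n^b_{g_+}\operatorname{Tr}(F_{bc}\mu^c)\Big|
\le C'\,\varepsilon^{3-d}\sup_{\partial M_\varepsilon}|\mu|
=C'\sup_{\partial M_\varepsilon}\big|\sigma^{3-d}\mu\big|
\stackrel{\varepsilon\to 0}{\longrightarrow}0\, ,
$$
by the assumption on $\mu$, completing the argument.

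The main obstacle is thus not the divergence-theorem bookkeeping but the sharp matching of powers of $\varepsilon$: the divergence theorem alone yields the prefactor $\varepsilon^{2-d}$, which the decay assumption on $\mu$ cannot by itself tame, and the one extra power of $\sigma$ needed is supplied precisely by the Yang--Mills equation controlling the normal component $\hat n^b F_{bc}$ to order $\sigma$. This is exactly where the hypothesis $d\neq 4$ is essential: in dimension four the two-form Yang--Mills equation is conformally invariant, the factor $d-4$ vanishes, the normal component of $F$ is no longer forced to be ${\mathcal O}(\sigma)$, and the estimate breaks down.
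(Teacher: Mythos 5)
Your proposal is correct and follows essentially the same route as the paper's own proof: divergence theorem, the conformal weight bookkeeping $\hat n^b_{g_+}=\sigma\hh\hat n^b_g$ and $\ext{\rm Vol}(\bar g_+)=\sigma^{1-d}\ext{\rm Vol}(\bar g_\varepsilon)$ yielding the prefactor $\varepsilon^{2-d}$, and the rewritten Yang--Mills equation~\nn{ccYM} supplying the crucial extra power of $\varepsilon$ via $\hat n^b F_{ba}=\tfrac{\sigma}{d-4}\hh g^{bc}\nabla_b F_{ca}$, so that the hypothesis on $\sigma^{3-d}\mu$ finishes the argument. Your closing observation about why $d\neq 4$ is essential matches the role of the $(d-4)$ factor in the paper exactly (note only that the specific coefficient $-\tfrac12$ you quote is the $d=6$ specialization of the general $1/(d-4)$, which you correctly state beforehand).
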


\begin{proof}
On $M_+$, the Yang--Mills equation can be rewritten in terms of a compactified metric $g=\sigma^2 g_+$, as~\cite{GLWZ}
\begin{equation}\label{ccYM}
j[A,g_+]_a:=\sigma\big(\sigma g^{bc}\nabla_b F_{ca}-(d-4)n^b F_{ba}\big)=0\, ,
\end{equation}
where $n:=\ext \sigma$, $n^a = g^{ab} n_b$, and the compactified metric $g = \sigma^2 g_+$ is everywhere used on the right hand side. Because $\sigma=\varepsilon$ along ${\partial M}_\varepsilon$ and $\sigma$ is defining for ${\partial M}= \partial M_0$, for any small enough~$\varepsilon$ there exists $0<k\in C^\infty M$ such that  the unit inward pointing normal to ${\partial M}_\varepsilon$ with respect to $g$ obeys $\hat n^a_\varepsilon=kn^a$. Hence, from the Yang--Mills equation, along~${\partial M}_\varepsilon$  we have
$$
\hat n_{\varepsilon}^b F_{ba} = \tfrac{k}{d-4}\varepsilon g^{bc} \nabla_b F_{ca}\, .
$$
So by the divergence theorem, 
$$
\int_{M_\varepsilon} 
 \ext {\rm Vol}(g_+ )\hh g_+^{ab}\nabla_a^{g_+}
 \operatorname{Tr} ( F_{bc}  \mu^c)
=-
\frac{\varepsilon^{3-d}}{d-4}
\int_{{\partial M}_\varepsilon} \ext \!\operatorname{Vol}(\bar g_\varepsilon) k 
\operatorname{Tr} (\mu^ag^{bc} \nabla_b F_{ca})\, ,  
$$
where $\bar g_\varepsilon$ is the metric induced along ${\partial M}_{\varepsilon}$ by $g$,  and we have used that the unit normal~$\hat n^{g_+}_\varepsilon$ with respect to $g_+$ obeys $\hat n^{g_+}_\varepsilon= \varepsilon \hat n^{g}_\varepsilon$. The result now follows.
\end{proof}

We are  ready to prove our main Theorem~\ref{main}.
\begin{proof}
We begin with the compactified metric
\begin{equation}\label{metrics}
\tilde g =e^{2f}g_+= e^{\frac{2f^\prime}{|F|_{g_+}}}g_+
=e^{2(1+K\sigma)\log\sigma + 2G \sigma}g_+
=e^{ 2\sigma (G + K^\prime\sigma \log \sigma)} g\, ,
\end{equation}
where we have written $K=K^\prime \sigma$ with $K^\prime\in C^\infty M$ because $K|_{\partial M}=0$ (see Lemma~\ref{thebluelemma}).
As $f^\prime$ solves the the system in Equation~\nn{system}, we have by Lemma~\ref{shift} that
$$
e^{6f} Q[{\tilde g},A] = -2 \operatorname{Tr}\big(\nabla^c_{g_+}(F^{ab}_{g_+} F_{ca}  \nabla_{b}f)\big)\, .
$$
Integrating the above over the cutoff interior with respect to the metric $g_+$ gives
$$
\int_{M_\varepsilon }  \ext {\rm Vol}(\tilde g ) Q[{\tilde g},A] =
-2\int_{M_\varepsilon }  \ext {\rm Vol}( g_+ ) g_+^{ab}
\nabla_a^{g_+}\operatorname{Tr}\big(
F_{bc}
F^{cd}_{g_+}  \nabla_{d}f\big)\, .
$$
Because 
$$\sigma^{-3} F_{g_+}^{cd} \nabla_d f 
=\sigma F_g^{cd} \nabla_d\big((1+\sigma^2 K^\prime)\log\sigma
+G \sigma\big)\, ,
$$
it follows that $(\sigma^{-3} F_{g_+}^{cd} \nabla_d f )|_
{{\partial M}_{\varepsilon}}$
is order ${\mathcal O}(\varepsilon)$ (remember that the Yang--Mills equation implies $2n^a F_{ab} = \sigma \nabla^a F_{ab}$).
So by Lemma~\ref{bdy} (with $d=6$) 
 it follows that $$\lim_{\varepsilon\to 0}\int_{M_\varepsilon }  \ext {\rm Vol}(\tilde g )\hh  Q[{\tilde g},A] =0\, .$$

On the other hand by Lemmas~\ref{E2Q}
and~\ref{Q2ren}
we have that
$$
Q[{\tilde g},A] = {\mathcal E}_{\rm ren}^{\tilde g}
+\operatorname{Tr}\Big(\frac14 \Delta^{\tilde g} (F^{ab}F_{ab})_{\tilde g}
+\nabla_a^{\tilde g} (F^{ab} j_b)_{\tilde g}
\Big)\, ,$$
so we now analyze
$$
\int_{M_\varepsilon }  \ext {\rm Vol}(\tilde g )
\operatorname{Tr}\big(\nabla_a^{\tilde g} (F^{ab} j_b)_{\tilde g}\big)
=-
\int_{{\partial M}_\varepsilon }  \ext {\rm Vol}(\bar{\tilde g}_\varepsilon )\hat n^a_{\tilde g,\varepsilon}
\operatorname{Tr} (F_{ab} j^b)_{\tilde g}
\, .
$$
On the right hand side we employed the divergence theorem. Both the metric $\tilde g$ and its Levi-Civita connection are continuous on $M$. The Yang--Mills equation can then be used to  show 
that $\hat n^a_{\tilde g,\varepsilon}
F_{ab}$ is ${\mathcal O}(\varepsilon)$ so
 the last display vanishes as $\varepsilon\to 0$.
 
 \smallskip
 
Next we analyze 
$$
\frac14 \int_{M_\varepsilon }  \ext {\rm Vol}(\tilde g ) \operatorname{Tr} \Delta^{\tilde g} (F^{ab}F_{ab})_{\tilde g}
=\frac14 \int_{{\partial M}_{\varepsilon}}
 \ext {\rm Vol}(\bar{\tilde g}_{\varepsilon} ) 
  \nabla_{\hat n_{\tilde g, \varepsilon} } |F|^2_{\tilde g}\:
  \stackrel{\varepsilon\to 0}
  {\longrightarrow}\:
  \frac14 \int_{{\partial M}}
 \ext {\rm Vol}(\bar{ \tilde g} ) 
  \nabla_{\hat n_{\tilde g} } |F|^2_{\tilde g}
  \, .
$$
The above limit again relied on continuity of $\tilde g$ and 
its Levi-Civita connection on $M$, and~$\bar {\tilde g}$ is the pullback of $\tilde g$ to $\partial M$.
We can now utilize Lemma~\ref{thebluelemma} to write
the above right hand side as
$$
-\int_{{\partial M}}
 \ext {\rm Vol}(\bar{ g}) 
  H^{\tilde g} |F|^2_{g}\, .
$$
Note that $\tilde g$ agrees with $g$ along ${\partial M}$, so we have replaced $\tilde g$ everywhere by $g$ save in the mean curvature. Now, from Equation~\nn{metrics}, 
$\tilde g =\tilde \Omega^2 g$ where  
$$
\tilde \Omega = e^{ \sigma (G + K^\prime\sigma \log \sigma)}\, . 
$$
But 
$$
H^{\tilde\Omega^2 g}= H^g + \tilde \Upsilon. \hat n_g\big|_{\partial M}\, , 
$$
where $ \tilde \Upsilon=\ext \log \tilde\Omega$; here we used that $\tilde \Omega|_{\partial M}=1$. 
In particular, we now choose for $g$ the canonical compactified metric ${g^{\rm c}}$ which happens to obey $H^{g^{\rm c}}=0$. Also
$$ \tilde \Upsilon. \hat n_g\big|_{\partial M}=G|_{\partial M}\, ,$$
because $\nabla_{\hat n} \sigma|_{\partial M}=1$.
Hence, by Theorem~\ref{BoundaryB},
$$
\frac14 \int_{M_\varepsilon }  \ext {\rm Vol}(\tilde g ) \operatorname{Tr} \Delta^{\tilde g} (F^{ab}F_{ab})_{\tilde g}
\stackrel{\varepsilon\to 0}
  {\longrightarrow}\:
  -
  \int_{\partial M}  \ext {\rm Vol}(\bar g )\hh 
 |F|_{{g^{\rm c}}}^2 \hh\hh \overline G =  -4E_{\rm ren}\, .
  $$

The last term to analyze is $\int_{M_\varepsilon }  \ext {\rm Vol}(\tilde g )
 {\mathcal E}_{\rm ren}^{\tilde g}$. The integrand here is a pointwise conformal invariant, so we may employ {\it any} metric $g$ in  the interior conformal class to evaluate it, and then take the $\varepsilon\to 0 $ limit. Thus
 $$
 E_{\rm ren} = \frac14\int_{M }  \ext {\rm Vol}( g )
 {\mathcal E}_{\rm ren}^{g}\, .
 $$

\end{proof}

\section{Examples}\label{exes}

As pointed out earlier, the power of our theory is that it applies to any vector bundle connection on a Poincar\'e--Einstein manifold.
We consider two examples. The first is for the tractor bundle equipped with its canonical tractor connection; see~\cite{BEG}. This realizes an interesting link with geometry intrinsic to the Poincar\'e--Einstein structure. 
The second example  of Maxwell's equations on the hyperbolic ball is a model for the general theory that is simple enough that relevant quantities can be computed explicitly, and so provides a nice check of our main results.

\subsection{The Tractor Connection}

A conformal $d$-manifold $(M,[g])$ canonically determines both a rank $d+2$ vector bundle $\cT M$, termed the tractor bundle~\cite{BEG}, along with 
a connection $A([g])$ termed the tractor connection~\cite{Thomas}. 
For a detailed discussion of tractor bundles and their associated calculus, see~\cite{BEG,CG}. Here we only need a few key ingredients. Firstly, given a choice of metric $g\in [g]$, a section $V^A\in \Gamma(\cT M)$ is determined by a triple
$$
V^A=\begin{pmatrix} v^+\\ v^a \\ v^-\end{pmatrix} \, ,
$$
where $v^\pm \in C^\infty M$ and $v^a\in \Gamma(TM)$.
The one-form valued tractor $\nabla_a^{A([g])} V^B\in \Gamma(T^*M \otimes \cT M)$ is then given by the triple
\begin{equation}\label{Tconn}
\nabla_a V^A = \begin{pmatrix} \nabla_a v^+
- v_a \\ 
\nabla_a v^b + P{}^b_a v^+ + \delta_a^b v^-  \\ \nabla_a v^--P{}_{ab}v^b\end{pmatrix}\, , 
\end{equation} 
where all Riemmannian quantities on the right hand side are computed with respect to the metric $g$. The curvature of the tractor connection is then given, in an obvious matrix notation,  by the endomorphism-valued two form 
$$
F_{ab}{}^C{}_D=
 \begin{pmatrix}
0&0&0\\
C_{ab}{}^c&W_{ab}{}^c{}_d&0\\
0&-C_{abd} &0
\end{pmatrix}\, .
$$
Now, if $h$ is some Riemannian metric (not necessarily an element of $[g]$), the current
\begin{multline*}
\!\!\!j\big[A([g]),h\big]{}_b{}^C{}_D=\quad\quad
\\[1mm]
\:\:\quad
 \begin{pmatrix}
\mbox{ \tiny
$-h^{ae}C_{abe}$}&
 \mbox{ \tiny
$-h^{ae}W_{abed}$}&
  \mbox{ \tiny
$0$}\\
\! \!\!\mbox{ \tiny
$
h^{ae}(\nabla_e C_{ab}{}^c\!\!-\!2K_{e[a}^f C_{|f|b]}{}^c
\!\!-\!P_e^fW_{ab}{}^c{}_f)$}\!\!&
\mbox{ \tiny
$
h^{ae}(
\nabla_eW_{ab}{}^c{}_d\!
-\!2K_{e[a}^f W_{|f|b]}{}^c{}_d\!
-\!\delta_e^c C_{abd}\!+\!C_{ab}{}^c g_{ed}) $}\!&
\mbox{ \tiny
$\!-\hh\!h^{ae}W_{ab}{}^c{}_e$} \!\!\\
\mbox{ \tiny
$0$}&
\mbox{ \tiny
$-h^{ae}(\nabla_e C_{abd}
+2K_{e[a}^f C_{|f|b]d}
-P_{ef}W_{ab}{}^f{}_d )$}&
\mbox{ \tiny
$ h^{ae}C_{abe}$}
\end{pmatrix}\! .\!\!\!
\end{multline*}
In the above, $h^{ab}$ denotes the inverse of $h_{ab }$ while all other quantities are computed with respect to the metric $g$. In addition, the tensor $K=\nabla^h-\nabla^g$ is the contorsion of the Levi-Civita connection of $h$ relative to that of $g$.

Specializing to the case $h=g_+=\sigma^{-2} g$ is a Poincar\'e--Einstein metric on $M_+$, then 
\begin{equation}\label{Jtract}
\sigma^{-1}\!
j\big[A([g]),g_+\big]_b{}^C{}_D=
\begin{pmatrix}
\scriptstyle0&\scriptstyle0&\scriptstyle0\\\!
\scriptstyle\sigma\nabla^a C_{ab}{}^c
-(d-4) C_{nb}{}^c
-\sigma P^{af}W_{ab}{}^c{}_f
\!\!&
\scriptstyle(d-4)(\sigma  C^c{}_{db}
-W_{nb}{}^c{}_d)
 &
 \scriptstyle0\\
\scriptstyle0&
\scriptstyle\!\!\!-\sigma\nabla^a C_{abd}
+(d-4) C_{nbd}
+\sigma P^{af}W_{abdf}\!
&
\scriptstyle0
\end{pmatrix}\! .
\end{equation}
Notice that the expression on the right hand side above 
extends smoothly to $M$.
Also, in the above we used that
both the Weyl and Cotton tensors are trace-free and that
the contorsion of $g_+$ relative to $g$ is given by
$
K_{eb}^a = -\sigma^{-1}(n_e \delta^a_b + n_b\delta^a_e - g_{eb} n^a)\, ,
$
where $n:=\ext \sigma$. In fact, the right hand side above vanishes identically. When $d=4$, this claim follows because $\nabla^a C_{abc} - P^{af} W_{abcf}$ equals the Bach tensor, which vanishes for any Einstein metric. The $d \neq 4$ case of the claim is proved in~\cite{Goal}.
Hence, we have just established the following result. 
\begin{proposition}
Let $(M_+,g_+)$ be a Poincar\'e--Einstein manifold where $g=\sigma^2 g_+$ with~$\sigma$ a defining function for $\partial M$. Then, the tractor connection $A([g])$ obeys the Yang--Mills equation
$$
j\big[A([g]),g_+\big]=0\, .
$$
\end{proposition}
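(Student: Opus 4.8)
The plan is to compute the Yang--Mills current $j[A([g]),g_+]$ directly from the general formula for $j[A([g]),h]$ already displayed, specializing to the Poincar\'e--Einstein representative $h=g_+=\sigma^{-2}g$. First I would substitute $h^{ab}=\sigma^2 g^{ab}$ and insert the contorsion $K_{eb}^a=-\sigma^{-1}(n_e\delta_b^a+n_b\delta_e^a-g_{eb}n^a)$ of $g_+$ relative to $g$, with $n:=\ext\sigma$. Using that the Weyl and Cotton tensors are totally trace-free, most of the contorsion-generated terms collapse, and the matrix current reduces to the form recorded in Equation~\nn{Jtract}. A preliminary observation worth isolating is that, after stripping the overall factor of $\sigma$, the remaining expression is polynomial in $\sigma$ and the smooth curvature tensors of $g$, so it extends smoothly across $\partial M$; this will matter for later sections but is not yet the point.

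The substance of the argument is then to show that every entry of the reduced matrix in~\nn{Jtract} vanishes identically. There are two structurally distinct combinations to handle. The middle-centre entry is proportional to $(d-4)(\sigma\, C^c{}_{db}-W_{nb}{}^c{}_d)$, while the off-diagonal entries involve $\sigma\nabla^a C_{ab}{}^c-(d-4)C_{nb}{}^c-\sigma P^{af}W_{ab}{}^c{}_f$ together with its index-lowered counterpart. I would recognize the divergence-of-Cotton plus Schouten-contracted-Weyl piece $\nabla^a C_{abc}-P^{af}W_{abcf}$ as (up to the sign conventions in use) the Bach tensor of $g$, so that its vanishing on the Einstein representative is the crux of the whole computation.

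The hard part is establishing these vanishings in arbitrary dimension. In dimension $d=4$ the matter is classical: the centre entry disappears automatically through its $(d-4)$ prefactor, and the off-diagonal combination is literally the Bach tensor, which is conformally invariant and annihilates every Einstein metric, so $j[A([g]),g_+]=0$ follows at once. For $d\neq 4$ neither simplification is available---the centre entry now genuinely requires the identity $\sigma\, C^c{}_{db}=W_{nb}{}^c{}_d$ relating the normal component of the Weyl tensor to the Cotton tensor, and the off-diagonal entries require the corresponding $\sigma$-weighted cancellation of divergence-of-Cotton against normal-component-of-Weyl terms. I would extract both of these from the Poincar\'e--Einstein structure equations, quoting the analysis of~\cite{Goal} where exactly this cancellation is carried out. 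The main obstacle is precisely this general-dimension step: it cannot be reduced to a single four-dimensional conformally invariant Bach identity, and instead must be read off from the defining-function structure of the Poincar\'e--Einstein metric.
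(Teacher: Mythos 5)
Your proposal is correct and follows the paper's own proof essentially step for step: both arguments specialize the general current to $h=g_+=\sigma^{-2}g$ using the contorsion $K_{eb}^a=-\sigma^{-1}(n_e\delta^a_b+n_b\delta^a_e-g_{eb}n^a)$ and trace-freeness of the Weyl and Cotton tensors to reach the matrix~\nn{Jtract}, then dispose of $d=4$ by recognizing the Bach tensor and cite~\cite{Goal} for the $d\neq 4$ vanishing. If anything, you make explicit the $d=4$ conformal-covariance step (transferring the vanishing of Bach from the Einstein representative $g_+$ to the compactified metric $g$) that the paper leaves implicit.
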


A short computation shows that for the tractor connection, 
$$
|F|_{g_+}^2 = W^{g_+}_{abcd} W_{g_+}^{abcd}\, ,
$$
so the renormalized Yang--Mills energy computes the 
renormalization of the integrated square of the Weyl tensor,
$
\operatorname{ren}\frac14
\int_{M_\varepsilon}\ext {\rm Vol}(g_+)
W^{g_+}_{abcd} W_{g_+}^{abcd}
$,
where as usual, the family of cut-off interiors $M_\varepsilon$ is determined by the canonical compactified metric~${g^{\rm c}}$.
Hence we have the following corollary of Theorem~\ref{main}.
\begin{corollary}
Let $(M^6_+,g_+)$ be a Poincar\'e--Einstein manifold where $g=\sigma^2 g_+$ for $\sigma$ a defining function for $\partial M$. Then, 
\begin{multline}\label{WW}
\operatorname{ren}
\frac14\int_{M_\varepsilon}\ext {\rm Vol}(g_+)\,
W^{g_+}_{abcd} W_{g_+}^{abcd}
\\
=
\int_M  {\rm Vol}(g)\Big(-\frac12
(\nabla^e W^{abcd})\nabla_e W_{abcd}
+8W^{abcd}\nabla_{d}C_{abc} 
\\
-8 C_{abc} C^{abc}
-8 W^{abce}P_{ef}W_{abc}{}^f
\\
+\frac12W^{abef} W_{abcd} W^{cd}{}_{ef}
+2W^{abcd}W_{aecf} W_{b}{}^e{}_d{}^f
  \big)\, .
\end{multline}
\end{corollary}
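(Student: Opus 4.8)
The plan is to read the corollary as the specialization of Theorem~\ref{main} to the tractor connection $A([g])$. By the preceding Proposition, $A([g])$ solves $j[A([g]),g_+]=0$ on the Poincar\'e--Einstein interior, and the identity $|F|^2_{g_+}=W^{g_+}_{abcd}W_{g_+}^{abcd}$ shows that the nowhere-vanishing hypotheses of Theorem~\ref{main} translate into nonvanishing of $|W|^2$, which I would invoke to apply the theorem. Granting this, the left-hand side of \nn{WW} is exactly $E_{\rm ren}$ for $A=A([g])$, so Theorem~\ref{main} gives $E_{\rm ren}=\tfrac14\int_M \ext {\rm Vol}(g)\,{\mathcal E}_{\rm ren}^g$. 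Because ${\mathcal E}_{\rm ren}$ is a pointwise conformal invariant, I would evaluate it using the very metric $g$ whose tractor splitting is in force, so the problem reduces to computing the six traces in \nn{Eren} from the explicit tractor data.

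For the algebraic input I would insert the curvature $F_{ab}{}^C{}_D$ read off from \nn{Tconn}, whose only nonzero blocks are the Weyl block $W_{ab}{}^c{}_d$ in the middle slot and the Cotton blocks $C_{ab}{}^c$, $-C_{abd}$ off the diagonal, together with the current $j[A([g]),g]$ obtained from the general $j[A([g]),h]$ formula by setting $h=g$, so that the contorsion $K$ vanishes. The key simplification is that $F$ and $j$ are nilpotent of triangular type, so the trace of any \emph{undifferentiated} product of them sees only the middle (Weyl) block; this immediately turns $\operatorname{Tr}(F^{ab}P_a{}^cF_{bc})$, $\operatorname{Tr}(F^{ab}W^c{}_{ab}{}^dF_{dc})$ and $\operatorname{Tr}(F^{ab}F^c{}_aF_{bc})$ into the $WPW$ term and the two Weyl-cubic contractions. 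I would next simplify the middle block of $j$ using the divergence identity \nn{divWeyl}, $\nabla^aW_{abcd}=(d-3)C_{cdb}$, together with the antisymmetry $C_{abc}=-C_{bac}$ and the cyclic identity $C_{[abc]}=0$; this collapses that block to a multiple of the Cotton tensor $C^c{}_{db}$, so that $\operatorname{Tr}(j^aj_a)$ produces the $C_{abc}C^{abc}$ term.

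The hard part is the two differentiated terms $-\tfrac12\operatorname{Tr}\big((\nabla^cF^{ab})\nabla_cF_{ab}\big)$ and $-3\operatorname{Tr}(F^{ab}\nabla_aj_b)$, because the tractor connection \nn{Tconn} has off-diagonal pieces (the Schouten and $\delta$ couplings between the $\pm$ and middle slots), so that $\nabla_cF_{ab}$ and $\nabla_aj_b$ are \emph{not} the block-wise derivatives: differentiation mixes the slots and feeds the Cotton and Schouten blocks into the derivative. Expanding these as genuine tractor endomorphisms, the middle--middle block of $\nabla_cF_{ab}$ acquires $\nabla_cW$ plus explicit Cotton corrections, and the remaining blocks contribute $\nabla C$, $PW$ and $PC$ terms; tracing then yields $(\nabla W)^2$, $W\nabla C$, and further $WPW$ and $C^2$ pieces. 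The main obstacle is bookkeeping all these block-mixing contributions and then reorganizing the resulting Weyl/Cotton/Schouten contractions into exactly the six listed ones. This requires repeated use of the Bianchi and Cotton identities and, crucially, integration by parts under $\int_M \ext {\rm Vol}(g)$ to trade the $W\nabla^2W$-type output of the current term against $(\nabla^eW^{abcd})\nabla_eW_{abcd}$ (checking that the total-divergence terms so produced carry no net boundary contribution); it is this exchange that fixes the coefficients $-\tfrac12$ of the gradient term and $+8$ of the $W^{abcd}\nabla_dC_{abc}$ term. I would conclude by matching the overall $\tfrac14$ normalization of Theorem~\ref{main} and verifying that the remaining cubic and $WPW$ coefficients agree with the Fefferman--Graham invariant, thereby recovering the result of Case \emph{et al}.
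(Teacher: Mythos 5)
Your overall route coincides with the paper's: specialize Theorem~\ref{main} to the tractor connection (Yang--Mills by the preceding Proposition, with $|F|^2_{g_+}=W^{g_+}_{abcd}W_{g_+}^{abcd}$ supplying the nonvanishing hypotheses), then evaluate the six traces in \nn{Eren} from the explicit tractor curvature and current with $h=g$ so the contorsion drops out. Your structural observations are exactly what the paper's computation uses: undifferentiated traces see only the middle Weyl block (the Cotton blocks are nilpotent and shift slots), the middle block of $j(g)$ collapses to $(d-4)C^c{}_{db}$ via \nn{divWeyl} together with the antisymmetry and cyclic identity of the Cotton tensor, and the differentiated terms require the full slot-mixing expansion of $\nabla_cF_{ab}$ and $\nabla_aj_b$, which the paper carries out via its explicit matrix for $\nabla^{h,A}_eF_{ab}{}^C{}_D$.

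The genuine misstep is your final claim that the current term produces ``$W\nabla^2W$-type output'' which must be traded against $(\nabla^eW^{abcd})\nabla_eW_{abcd}$ by integration by parts under $\int_M$, and that this exchange fixes the coefficients $-\tfrac12$ and $+8$. The paper's proof is entirely pointwise: no integration by parts occurs anywhere. Because the Bianchi identity is applied \emph{inside} $j$ and $\nabla F$ at the level of tensors, the relevant blocks are already first order in Cotton, giving directly $\operatorname{Tr}(F^{ab}\nabla_aj_b)=-(d-4)\big(C^{abc}C_{abc}-W^{abcd}\nabla_dC_{abc}\big)$ and $\operatorname{Tr}\big((\nabla^cF^{ab})\nabla_cF_{ab}\big)=-(\nabla^eW^{abcd})\nabla_eW_{abcd}+2(d-4)C^{abc}C_{abc}+4W^{abcd}\nabla_dC_{abc}-4W^{abce}P_{ef}W_{abc}{}^f$; note that the term $8W^{abcd}\nabla_dC_{abc}$ survives \emph{un-integrated} in \nn{WW}, which is already incompatible with your proposed exchange. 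Moreover the detour is not harmless: $M$ is compact with boundary, so the divergences you would generate contribute boundary integrals such as $\int_{\partial M}\nabla_{\hat n}|W^g|^2$ that have no reason to vanish for an arbitrary compactified metric $g=\sigma^2 g_+$ (controlling exactly such terms elsewhere in the paper requires Lemma~\ref{thebluelemma} together with the choice $H^{g^{\rm c}}=0$, i.e.\ the canonical compactification only), and integration by parts would destroy the pointwise conformal invariance of the integrand that makes the corollary valid for every choice of $\sigma$. If you carry out the block bookkeeping correctly you will find no second derivatives of $W$ and no residual divergences to dispose of; if instead you use integration by parts to force the coefficients, you have not proved the identity as stated---whose right-hand side is the specific pointwise invariant obtained by evaluating ${\mathcal E}_{\rm ren}$ on the tractor data---and you are left owing a boundary-vanishing argument that fails in this generality.
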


\begin{proof}
We need to evaluate the six terms on the right hand side of Equation~\nn{Eren}
when~$A$ is the tractor connection. Let us work in $d$ dimensions, and set $d$ to six at the end of our computation. For that notice that the current $j_b{}^C{}_D(g)$ can be computed by  setting $\sigma =1$ in Equation~\nn{Jtract}, which gives 
$$
j_b{}^C{}_D(g)=
 \begin{pmatrix}
0&0&0\\
\nabla^a C_{ab}{}^c-P^{ef}W_{eb}{}^c{}_f&(d-4)C^c{}_{db}&0\\
0&-\nabla^a C_{abd}+P^{ef}W_{ebdf} &0
\end{pmatrix}
$$
and in turn 
$$\operatorname{Tr} j^a j_a
=
(d-4)^2 C_{cda}C^{dca}=-(d-4)^2 C_{abc}C^{abc}\, .
$$
Next, it is not difficult to employ Equation~\nn{Tconn} to compute $\nabla_a j_b$, from which we  find
$$\operatorname{Tr} F^{ab}\nabla_a j_b
=-(d-4)\big(
 C^{abc}C_{abc}
-W^{abcd}\nabla_{d}C_{abc} 
\big)\, .$$
Computing the traced square of the tractor gradient of tractor curvature is more involved. For that we first observe that
$$
\nabla^{h,A}_e\! F_{ab}{}^C{}_D
\stackrel g=\!
 \begin{pmatrix}
 \scriptstyle
-C_{abe}&
 \scriptstyle-W_{abed}&
  \scriptstyle0\\
  \scriptstyle\!
\!\nabla_e C_{ab}{}^c-2K_{e[a}^f C_{|f|b]}{}^c-P_e^fW_{ab}{}^c{}_f&\!
\scriptstyle\nabla_eW_{ab}{}^c{}_d
-2K_{e[a}^f W_{|f|b]}{}^c{}_d
-\delta_e^c C_{abd}+C_{ab}{}^c g_{ed} &
\scriptstyle-W_{ab}{}^c{}_e\! \\
\scriptstyle0&
\scriptstyle-\nabla_e C_{abd}
+2K_{e[a}^f C_{|f|b]d}
-P_{ef}W_{ab}{}^f{}_d &
\scriptstyle C_{abe}
\end{pmatrix} \! .
$$
Specializing to $h=g_+$, squaring, and tracing the above leads to  
\begin{align*}
\operatorname{Tr} (\nabla^c F^{ab})\nabla_c F_{ab}&=
2C^{abc}C_{abc}
-4W^{abec}(
\nabla_e C_{abc}-P_e{}^fW_{abcf})
\\ 
&\phantom{=}
+(\nabla^eW^{ab}{}^d{}_c
-g^{ed} C^{ab}{}_c+C^{abd} \delta^e_{c})
(\nabla_eW_{ab}{}^c{}_d
-\delta_e^{c} C_{abd}+C_{ab}{}^c g_{ed})
\\
&
=
-(\nabla^e W^{abcd})\nabla_e W_{abcd}
+2(d-4)C^{abc}C_{abc}+4W^{abcd}\nabla_d C_{abc}
\\ 
&\phantom{=}
-4W^{abce} P_{ef} W_{abc}{}^f\, .
\end{align*}
The remaining cubic terms can be computed by inspection, leading to the quoted result.
\end{proof}

\begin{remark}\label{case}
The renormalization of the integral of the squared Weyl tensor is a special case of the renormalized curvature integrals studied by Case {\it et al} in~\cite{Caseetal}.
Indeed, that work establishes that in six dimensions this quantity is a non-zero multiple of the invariant given in Equation~1.1c of~\cite{Casesix}. It is not difficult to verify that their result matches Equation~\nn{WW} above, upon employing the last identity quoted in the proof of Proposition~2.1 of~\cite{Casesix}.
\end{remark}

\subsection{Maxwell and the Hyperbolic Ball}

A model problem is the case of Maxwell's equations on the hyperbolic ball. 
Our plan is to first construct a global solution to Maxwell's equations,  then to explicitly compute its renormalized energy, and finally compare that result with Theorem~\ref{main}.

Let $M=\overline{B^d}\subset {\mathbb R}^d$ be the closed unit ball (with respect to the Euclidean metric $\delta$) equipped with the hyperbolic metric 
$$
g_+ = \frac{\delta}{\sigma^2}\, ,\:\mbox{ where }
\sigma = \frac{1-r^2}{2}\, ,
$$
on its interior $B^d$.  Here $r$ is the radial Euclidean  distance from the origin. It is easy to check that $g_+$ is a Poincar\'e--Einstein metric. The Maxwell system is the problem of finding a smooth  two-form $F\in \Omega^2 M$ that is closed on $M$ and coclosed with respect to the metric $g_+$ on $M_+$, so that
$$
\nabla_{[a} F_{bc]} = 0 = g_+^{ab} \nabla_a^{g_+} F_{bc}\, .
$$
Note that we also take $F$ to be pure imaginary because it is the curvature of a $U(1)$ connection.

We would like to perform all computations with respect to the Euclidean metric $\delta$, so rewrite the coclosed condition (using Equation~\nn{ccYM}) as
\begin{equation}\label{ccYM1}
0 = \sigma \nabla^a F_{ab} - (d-4) n^a F_{ab}=:\jmath_b\, ,
\end{equation}
where $n=\ext \sigma$. Notice that
$$\delta_{ab}=-\nabla^\delta_a n_b\, .$$ All index manipulations and affine connections are henceforth those of $\delta$.

\medskip

To construct a model solution 
we make an ansatz $i F=\ext B$ where
$$
B_a=-\frac12 f(r^2) \varphi_{ba}n^b\, ,
$$
and the (real) two-form $\varphi$ is parallel with respect to the Levi-Civita connection $\nabla^\delta$ of the compactified metric $\delta$.  
It follows that 
$$
iF_{ab}=f \varphi_{ab} + f' (n_{a} 
\varphi_{nb}-n_b \varphi_{na})\, .
$$
Thus, working from now on in $d=6$ dimensions,
$$
i\hh n^a F_{ab} = (|n|^2 f'+f )\varphi_{nb}
\mbox{ and }
i\hh \nabla^a F_{ab} = -
2(|n|^2 f''+4f') \varphi_{nb}=:i\cdot j_b
\, ,$$
so that Equation~\nn{ccYM1} gives
$$
i\cdot \jmath_b =-\big[(1-|n|^2)(|n|^2 f''+4f')
+2 (|n|^2 f'+f )
\big] \varphi_{nb}\, .
$$

Let us call $z:=|n|^2=r^2$, so we are tasked with solving the hypergeometric equation
$$
(1-z)z f''(z) + (4-2z) f'(z) + 2f(z)=0\, .
$$
Fortunately, there are two rather uncomplicated solutions, $2-z$ and $(1-2z)/z^3$. We focus on the  first as it gives a global solution to our problem,
\begin{equation}\label{Maxsol}
i F_{ab} = (1+2\sigma) \varphi_{ab} - n_a \varphi_{nb}+ n_b \varphi_{na}\, .
\end{equation}

\medskip
We are now ready to compute the renormalized energy explicitly, first observe that
\begin{equation}\label{FF}
|F|_{g_+}^2=\sigma^4\big[(1+2\sigma)^2|\varphi|^2_\delta
-2(1+6\sigma) \varphi^{na}\varphi_{na}\big]\, .
\end{equation}
To construct the family of cut-off interiors corresponding to the canonical compactified metric ${g^{\rm c}}$, 
 we introduce the function $$\rho:M\to [0,1]\quad\mbox{  defined by }\:
\rho:=\frac{1-r}{1+r}\, ,
$$
which satisfies
$$
\frac1\sigma
\frac{\ext r}{\ext \rho }=-\frac{1}\rho\, .
$$
Note that $-\log \rho$ measures the hyperbolic distance to the origin.
The metric $g_+$ then takes the normal form
$$
g_+=\frac{\ext \rho^2 + h(\rho) ds^2_{S^5} }{\rho^2}=\frac{{g^{\rm c}}}{\rho^2}\, ,
$$
where $ ds^2_{S^5}$ is the standard unit round sphere metric and $h(\rho)=(\rho r/\sigma)^2$.
Also note that
$
r(\rho)=(1-\rho)/(1+\rho)
$.
We need to study the $\varepsilon$-family of balls for which
 $\rho\in [\varepsilon,1]$. Hence we let $B_\varepsilon$ be a six-ball of Euclidean radius $r=(1-\varepsilon)/(1+\varepsilon)$.

With  the canonical  family of cut-off interiors in hand, we now consider the regulated energy of~\nn{Ereg}, which  by virtue of Equation~\nn{FF}, is given as
$$
E_{\varepsilon} = \frac14 \int_{B_\varepsilon} 
 \ext {\rm Vol}(\delta)
\frac{(1+2\sigma)^2|\varphi|^2_\delta
-2(1+6\sigma) \varphi^{na}\varphi_{na}}{\sigma^2}\, .
$$ 
Here we used that $ \ext {\rm Vol}(g_+)=
 \ext {\rm Vol}(\delta)/\sigma^6$.
 Now note that $\varphi^{\hat n a} \varphi_{\hat n a}$, where $\hat n:=n/|n|_\delta$, defines a map $S^5\to {\mathbb R}_{\geq 0}$,
and that $|\varphi|_\delta^2$ is a constant.
Thus we have
\begin{align*}
E_{\varepsilon} &= \frac{|\varphi|_\delta^2}4 \int_{B_\varepsilon} 
 \ext {\rm Vol}(\delta)
\frac{(1+2\sigma)^2}{\sigma^2}
-\frac12
 \int_{B_\varepsilon} 
 \ext {\rm Vol}(\delta)
\frac{(1-2\sigma)(1+6\sigma)}{\sigma^2}
 \varphi^{\hat na}\varphi_{\hat na}\\
 &=
 \frac{(3+\varepsilon)(1-\varepsilon)^6(1+3\varepsilon)}{24(1+\varepsilon)^6\varepsilon}
 \, 
 {\rm Vol}(S^5)|\varphi|_\delta^2
 -
 \frac{(1-\varepsilon)^8}{4(1+\varepsilon)^6\varepsilon}
 \int_{S^5} 
 \ext {\rm Vol}(S^5) \varphi^{\hat na}\varphi_{\hat na}\\[1mm]
 &=
 \Big[\frac1{8\varepsilon}-\frac{13}{12}\Big]{\rm Vol}(S^5)|\varphi|_\delta^2
 +
 \Big[-\frac1{4\varepsilon}+\frac{7}2\Big]
  \int_{S^5} 
 \ext {\rm Vol}(S^5) \varphi^{\hat na}\varphi_{\hat na}
 +{\mathcal O}(\varepsilon)
 \, .
\end{align*}
In the above, the volume of a five sphere is ${\rm Vol}(S^5)={\pi^3}$, and $\ext {\rm Vol}(S^5)$ denotes the standard five sphere volume element.  Importantly, as predicted by general theory, there is no $\log\varepsilon$ term in the above.
It now follows that the renormalized energy for the Maxwell solution of Equation~\nn{Maxsol} is 
$$
E_{\rm ren} = 
-\frac{13}{12}\, {\rm Vol}(S^5)|\varphi|_\delta^2
 +\frac{7}2\, 
  \int_{S^5} 
 \ext {\rm Vol}(S^5) \varphi^{\hat na}\varphi_{\hat na}
 \, .
$$
It is not difficult to prove that
\begin{equation}\label{Ipleadthesixth}
\int_{S^5} \ext {\rm Vol}(S^5) \varphi^{\hat na}\varphi_{\hat na}
=\frac16\hh  {\rm Vol}(S^5)|\varphi|_\delta^2\, .\end{equation}
Hence
$$
E_{\rm ren}=-\frac1{2} {\rm Vol}(S^5)|\varphi|_\delta^2\, .
$$

\medskip


Instead of computing {\it na\"ively}, we can also apply Theorem~\ref{main}. For that, first note that for our solution given in Equation~\nn{Maxsol}, 
$$
i\cdot j_a=8\varphi_{na}\:\implies\:
i \nabla_a j_b = -8 \varphi_{ab}\, ,
$$
and also
$$
i\nabla_c F_{ab}=2n_c \varphi_{ab}+\delta_{ca}\varphi_{nb}
-\delta_{cb}\varphi_{na}
+n_a \varphi_{cb}-n_b \varphi_{ca}\, .
$$
Thus Equation~\nn{Eren} gives
\begin{align*}
{\mathcal E}_{\rm ren}&=
64 \varphi^{na}\varphi_{na}
+\tfrac12[6(1-2\sigma)|\varphi|^2_\delta
+28 \varphi^{na}\varphi_{na}]
+3[-8(1+2\sigma)|\varphi|_\delta^2+16 \varphi^{na}\varphi_{na}]
\\
&
=
-(21+54\sigma)|\varphi|_\delta^2
+126 \varphi^{na}\varphi_{na}\, ,
\end{align*}
so that integrating the above over the unit six ball and applying Equation~\nn{Ipleadthesixth}, we have 
$$
\frac14
\int_B  \ext {\rm Vol}(\delta)
{\mathcal E}_{\rm ren}=
-\frac{37}{32} \hh {\rm Vol}(S^5)|\varphi|_\delta^2
+\frac{63}{16}\hh    \int_{S^5} 
 \ext {\rm Vol}(S^5) \varphi^{\hat na}\varphi_{\hat na}
 =-
\frac12 {\rm Vol}(S^5)|\varphi|_\delta^2\, .
$$
This precisely matches the explicit computation performed above.

\color{black}

\section*{Acknowledgements}
We  thank Jeffrey Case 
for explaining Remark~\ref{case} to us 
and Bruno Nachtergaele  for pointing out the treatment of Schr\"odinger operators in References~\cite{Reed,Kato}. We also thank Jesse Gell-Redman and Fang Wang  for useful discussions. In addition, we are very grateful to Rafe Mazzeo for assisting us with the proof of Theorem~\ref{mainth}.
A.R.G.'s contribution to this work was partially supported by a grant from the Simons Foundation during a visit to the 
 Isaac Newton Institute 
for Mathematical Sciences.
A.R.G. and A.W. acknowledge support from the Royal Society of New Zealand via Marsden Grant 19-UOA-008.  A.W. was also supported by Simons Foundation Collaboration Grant for Mathematicians ID 686131. This article is based upon work supported by the European Cooperation in
Science and Technology  Action 21109 CaLISTA, {\tt www.cost.eu}, HORIZON-MSCA-2022-SE-01-01 CaLIGOLA, MSCA-DN CaLiForNIA - 101119552. Y.Z. acknowledges support from
National Key Research and Development Project SQ2020Y\-FA070080 and NSFC 12071450.

\end{document}